\numberwithin{equation}{section}
\newtheorem{theorem}{Theorem}[section]
\newtheorem{lemma}[theorem]{Lemma}
\newtheorem{proposition}[theorem]{Proposition}
\newtheorem{corollary}[theorem]{Corollary}
\newtheorem{conjecture}[theorem]{Conjecture}
\theoremstyle{definition}
\newtheorem{definition}[theorem]{Definition} 
\newtheorem{remark}[theorem]{Remark}
\newtheorem{example}[theorem]{Example}
\newtheorem{acknowledgement}{Acknowledgement}
\newcommand{\K}{\mathbb{K}}
\DeclareMathOperator{\supp}{supp}
\DeclareMathOperator{\Ann}{Ann}
\newcommand{\qand}{\quad \mbox{and} \quad}
\newcommand{\qfor}{\quad \mbox{for} \quad}
\newcommand{\qif}{\quad \mbox{if} \quad}
\newcommand{\Ind}{\text{Ind}}
\newcommand{\st}{\colon}
\newcommand{\h}{\mathcal{H}}
\begin{document}
 
\title[Roller Coaster Gorenstein algebras and Koszul algebras failing the WLP]{Roller Coaster Gorenstein algebras and Koszul algebras failing the weak Lefschetz property}

\author[T. Holleben]{Thiago Holleben}
\address[T. Holleben]
{Department of Mathematics and Statistics,
Dalhousie University
}
\email{hollebenthiago@dal.ca}

\author[L. Nicklasson]{Lisa Nicklasson}
\address[L. Nicklasson]
{Division of Mathematics and Physics,
Mälardalen University
}
\email{lisa.nicklasson@mdu.se}

\keywords{Weak Lefschetz property, graded artinian rings, whiskered graphs, Gorenstein rings, Koszul algebras}
\subjclass[2020]{13E10, 13H10, 13F55, 05C31}

 
\begin{abstract}
Inspired by the Roller Coaster Theorem from graph theory, we prove the existence of artinian Gorenstein algebras with unconstrained Hilbert series, which we call \emph{Roller Coaster algebras}. Our construction relies on Nagata idealization of quadratic monomial algebras defined by whiskered graphs. The monomial algebras are interesting in their own right, as our results suggest that artinian level algebras defined by quadratic monomial ideals rarely have the weak Lefschetz property. In addition, we discover a large family of G-quadratic Gorenstein algebras failing the weak Lefschetz property.  
\end{abstract}
\maketitle




\section{Introduction}

In 1927, Macaulay proved a famous theorem that classifies sequences that may arise as the Hilbert series of a standard graded algebra in terms of specific growth conditions of the sequence. 
Understanding which numerical sequences can be realized by certain mathematical objects is often a fundamental question in many areas of mathematics. 

An example in commutative algebra is the problem of characterizing Hilbert series of artinian Gorenstein algebras.
Such Hilbert series are typically unimodal, but a first example of an artinian Gorenstein algebra with a nonunimodal 
Hilbert series was discovered by Stanley in 1978 \cite{S1978}. Further on Boij \cite{Bo1995} showed in 1995 that Hilbert series of artinian Gorenstein algebras can have arbitrarily many valleys. Both results rely on the use of {\it Nagata idealizations}.

A famous example in combinatorics is the question of characterizing the integer sequences appearing as independence sequences of graphs. For arbitrary graphs, Alavi et al.~\cite{AMSE1987} showed in 1987 that up to rescaling, the independence sequence of an arbitrary graph can have any shape. In 2000, Brown et al.~\cite{BDN2000} conjectured that the independence  sequence of a well-covered graph is unimodal. This conjecture was disproved shortly after by Michael and Williams in~\cite{MT2003} and led to the following result due to Cutler and Pebody~\cite{CP2017}.

\noindent {\bf Roller Coaster theorem for well-covered graphs} (\cite[Theorem 1.5]{CP2017}).
    {\it Given a positive integer $q$ and a permutation $\pi$ of $\{\lceil q/2 \rceil, \dots, q\}$, there exists a well-covered graph $G$ with independence sequence $i_1, \dots, i_q$ such that
    $$
    i_{\pi(\lceil q/2 \rceil)} < i_{\pi(\lceil q/2 \rceil + 1)} < \dots < i_{\pi(q)}.
    $$
    In other words, the second half of the independence sequence of a well-covered graph is unconstrained.}

One of our main result is the following strengthening of Boij's result~\cite[Theorem 2.7]{Bo1995}, where we combine the graph theoretical techniques from~\cite{CP2017} with Nagata idealization to show the following. 

\begin{theorem}[{\bf Roller Coaster theorem for artinian Gorenstein algebras}]\label{thm:rcintro}
Given a positive integer $d$ and a permutation $\pi$ of the numbers $\{1, \ldots , \lfloor d/2 \rfloor\}$, there exists an artinian Gorenstein algebra with Hilbert series $\sum_{i=0}^dh_it^i$ such that 
$$
h_{\pi(1)} < \dots < h_{\pi(\lfloor d/2 \rfloor)}.
$$
\end{theorem}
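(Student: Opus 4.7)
The plan is to combine the Cutler--Pebody Roller Coaster Theorem for well-covered graphs \cite[Theorem 1.5]{CP2017} with a Nagata idealization of a squarefree monomial artinian algebra. The well-covered graph will give us a level algebra $A$ whose Hilbert function is literally the independence sequence, and the idealization will upgrade $A$ to a Gorenstein algebra $B$ with a palindromized version of that Hilbert function.

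First I will set $q = d - 1$ and translate the given permutation $\pi$ of $\{1, \ldots, \lfloor d/2 \rfloor\}$ into a permutation $\sigma$ of the controllable second-half indices $\{\lceil q/2 \rceil, \ldots, q\}$ via the reflection $j \mapsto d - j$ (filling in any extra index arbitrarily when $q$ is even). The Roller Coaster Theorem for well-covered graphs then produces a well-covered graph $G$ with $\alpha(G) = q$ whose independence sequence $(i_0, i_1, \ldots, i_q)$ satisfies $i_{\sigma(\lceil q/2 \rceil)} < \cdots < i_{\sigma(q)}$. I will then form the artinian monomial algebra
\[
A \;=\; k[x_v : v \in V(G)] \,\big/\, \bigl(I(G) + (x_v^2 : v \in V(G))\bigr),
\]
whose squarefree monomial basis is indexed by the independent sets of $G$, so $H_A(j) = i_j$. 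Well-coveredness forces every maximal independent set to have size $q$, hence the socle of $A$ is concentrated in degree $e := q$ and $A$ is a level algebra. Taking the Nagata idealization $B = A \ltimes \omega_A(-1)$, where $\omega_A = \mathrm{Hom}_k(A, k)$ is the graded canonical module placed in degrees $0$ through $e$, standard facts -- $A$ acts faithfully on $\omega_A$ and $\omega_A$ has one-dimensional socle -- give that $B$ is artinian graded Gorenstein of socle degree $d = e + 1$ with palindromic Hilbert function
\[
H_B(j) \;=\; i_j + i_{d-j}
\]
(extending $i_k = 0$ outside $[0, q]$).

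It remains to verify the permutation pattern. For $j = 1, \ldots, \lfloor d/2 \rfloor$, the index $d - j$ lies in the Cutler--Pebody controllable range, and the choice of $\sigma$ forces $(i_{d - \pi(r)})_r$ to be strictly increasing in $r$. The main obstacle is the uncontrollable first-half contribution $i_j$ in $H_B(j)$: without care, adding these small-index terms could reorder the sums. To handle this, I will arrange -- by iterating or scaling the Cutler--Pebody construction (for instance via disjoint unions with large well-covered graphs, or via whiskered enlargements that inflate the second-half counts while keeping the small-index counts modest) -- that the gaps between successive prescribed values $i_{d - \pi(r)}$ strictly exceed every first-half value $i_j$. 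Once this magnitude domination is secured, the ordering on $\bigl(H_B(\pi(r))\bigr)_{r=1}^{\lfloor d/2 \rfloor}$ inherits the ordering on $\bigl(i_{d - \pi(r)}\bigr)_r$, giving the desired $h_{\pi(1)} < \cdots < h_{\pi(\lfloor d/2 \rfloor)}$. The Nagata step is routine; the technical heart of the argument is this last magnitude control.
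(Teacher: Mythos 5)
Your high-level plan matches the paper: realize the $h$-vector of the artinian Gorenstein algebra as $h_j = i_j(G) + i_{d-j}(G)$ for a suitable well-covered graph $G$, via a Nagata idealization of the level monomial algebra $A(G)$ (the paper packages this as the annihilator of a simplicial Perazzo form, per Lemmas~\ref{lemma:Simplicial-Perazzo} and~\ref{lemma:dimension_sum}). You also correctly identify the genuine obstacle: the first-half terms $i_j$ contaminate the palindromized sums, so realizing a prescribed order on the second-half subsequence $(i_{d-\pi(r)})_r$ does not automatically realize it on $(h_{\pi(r)})_r$.

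The gap is in your ``magnitude domination'' step. The Cutler--Pebody Roller Coaster Theorem (your cited Theorem 1.5) gives \emph{order} control on the second half of the independence sequence, but no \emph{magnitude} control, so ``iterating or scaling'' that theorem is not something it licenses. The concrete mechanisms you float do not clearly deliver what you need: a disjoint union of well-covered graphs has independence sequence equal to the \emph{convolution} of the two sequences, which both changes the independence number and mixes low- and high-index counts (there is no reason the resulting second-half gaps dominate the new first-half values, nor that the prescribed ordering survives convolution); whiskering a well-covered graph likewise reshuffles the sequence rather than uniformly inflating its tail. What actually gives magnitude control is the deeper machinery in \cite{CP2017} that the paper invokes: Theorem~\ref{thm:characterization-approximate} characterizes \emph{approximate well-covered independence polynomials} by the log-convexity-type condition $a_k/\binom{q}{k}$ nondecreasing, and this lets one hand-pick the entire target sequence $(a_1,\dots,a_q)$ up to a common scaling factor. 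The paper exploits this in Lemma~\ref{lemma:approximate-sequence} by choosing $a_i=\binom{q}{i}$ for small $i$ and $a_i = 3^q + \pi(i)\binom{q}{\lceil q/2\rceil+1}$ for $i\ge \lceil q/2\rceil$: the huge constant $3^q$ ensures the convexity condition, and the multiplier $c=\binom{q}{\lceil q/2\rceil+1}$ is chosen so that $c(\pi(\ell)-\pi(k))$ overwhelms the first-half perturbation $\binom{q}{k-1}-\binom{q}{\ell-1}$, making the \emph{sums} $a_k+a_{q-k+1}$ (not merely the second-half $a_k$'s) realize the permutation. Lemma~\ref{lemma:inequalities-approximate} then transfers these strict inequalities on sums to the actual $\varepsilon$-certificate graph. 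Without this finer tool, your domination claim is unsubstantiated, and it is precisely ``the technical heart'' you flagged.
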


The Roller Coaster artinian Gorenstein algebras are in fact \emph{Perazzo algebras}, arising from Nagata idealizations of monomial algebras. Perazzo algebras
have recently been studied in a series of papers \cites{AADF2024, FMM2023, MM2024, MP2024} in the context of Lefschetz properties. Our construction provides a large family of G-quadratic Perazzo algebras failing the weak Lefschetz property.

 \begin{theorem}\label{t:perazzointro}
    Let $H$ be a bipartite graph on $n \geq 12$ vertices and $G$ the whiskering of $H$. Let 
    $$
        F(G) = \sum_{i = 1}^{m} x_i u_{F_i} \in \K[x_1,\dots, x_m, u_1, \dots, u_{2n}],
    $$
    where $F_1, \ldots, F_m$ are the maximal independent sets of $G$ and $u_{F_i} = \prod_{j \in F_i} u_j$. Then $F(G)$ is the Macaulay dual generator of a G-quadratic Perazzo algebra failing the weak Lefschetz property.
\end{theorem}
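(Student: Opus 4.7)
The plan is to verify three properties of the artinian Gorenstein algebra $A = R/\Ann(F(G))$ where $R = \K[\partial_{x_1}, \ldots, \partial_{x_m}, \partial_{u_1}, \ldots, \partial_{u_{2n}}]$ acts on $F(G)$ by contraction: it is Perazzo, it is G-quadratic, and it fails WLP.

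First, I would observe that $F(G)$ has the defining shape of a Perazzo form. Since whiskered graphs are well-covered, every maximal independent set $F_i$ has the same size $n$, so the monomials $u_{F_i}$ all have the same degree $n$. Moreover $F(G)$ is linear in the $x_i$, which is precisely the structural hypothesis required to identify $A$ as a Perazzo algebra; in the Nagata-idealization language, the $u$-side of $A$ is the quotient of $\K[u]$ by the ideal of forms annihilating $\langle u_{F_1}, \ldots, u_{F_m}\rangle$, and the $x$-side idealizes a module whose basis corresponds bijectively to the maximal independent sets of $G$.

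Second, for G-quadraticity, I would exploit the fact that whiskered graphs are vertex-decomposable and their cover ideals (equivalently, the polarizations of the squarefree ideals generated by $\{u_{F_i}\}$) admit quadratic Gröbner bases by well-known results on whiskered graphs (Villarreal, Francisco--Van Tuyl, and the literature on sequentially Cohen--Macaulay graphs). I would then invoke (or set up, if not already established earlier in the paper) a general principle: the Nagata idealization of a G-quadratic monomial algebra by the canonical module of socle type determined by the maximal independent sets inherits a quadratic Gröbner basis, where the new quadratic relations are precisely $x_i x_j$, together with $x_i u_k$ for $k \notin F_i$ and the appropriate $u$-quadrics that encode pairs of maximal independent sets agreeing in all but one vertex. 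The bipartite assumption on $H$ guarantees the whiskered structure is sufficiently well-behaved for a consistent term order to exist.

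Third, and this is the main obstacle, I would establish failure of WLP. The standard reduction for Perazzo algebras (Mezzetti--Miró-Roig--Ottaviani style) reduces WLP for a linear form $\ell$ concentrated on the $u$-variables to a rank question about the multiplication map
\[
\mu_\ell \colon \langle u_{F_1}, \ldots, u_{F_m}\rangle \longrightarrow (S/J)_{n+1},
\]
where $S = \K[u]$ and $J$ is the annihilator ideal of the span of the $u_{F_i}$. I would then use the bipartition $V(H) = X \sqcup Y$ to produce an explicit syzygy: for each edge $v_i v_j \in H$, the local exchange between $v_i, w_i$ and $v_j, w_j$ in the maximal independent sets yields a relation among the images $\ell \cdot u_{F_i}$ once one sums over the bipartite classes with appropriate signs. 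The threshold $n \geq 12$ should enter as the smallest size at which the dimension of these syzygies provably exceeds the expected corank of $\mu_\ell$, guaranteeing that the rank defect is genuine for \emph{every} bipartite $H$ of that size, not just for favorable constructions. The hardest step is this combinatorial-to-rank translation: one must show that the bipartite exchange relations survive after quotienting by $J$ and remain linearly independent, for which I would analyze the Stanley--Reisner structure of the independence complex of $G$ and use that its $f$-vector (equivalently the independence sequence of $G$) has the asymmetry, coming from the Roller Coaster phenomenon for well-covered graphs invoked earlier in the paper, which forces the Hilbert function of $A$ to grow more slowly between the critical degrees than maximality of $\mu_\ell$ would permit.
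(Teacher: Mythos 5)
Your three-part skeleton (Perazzo, G-quadratic, WLP) is the right organization, but each part has a gap and the third part takes a route the paper does not take.

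\textbf{Perazzo.} You verify the structural shape of $F(G)$ but never check the defining condition of a Perazzo form: that the monomials $u_{F_1},\ldots,u_{F_m}$ are \emph{algebraically dependent}. In the paper this is the content of Lemma~\ref{lemma:Simplicial-Perazzo}: by Lemma~\ref{lemma:log-matrix}, algebraic dependence is equivalent to the log matrix (which is exactly the matrix of $\times L: A(G)_1 \to A(G)_{n}$) having rank less than $m$, and this follows because $\Ind(G)$ has more facets than vertices for a whiskered bipartite graph, by \cite[Corollary 5.13]{H2024}. Without this your first step establishes nothing beyond notation.

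\textbf{G-quadratic.} You appeal to a ``general principle'' that a Nagata idealization of a G-quadratic monomial algebra remains G-quadratic. No such general principle is stated in the paper, and it would require its own proof. The paper instead cites D'Al\`i--Venturello (Theorem~\ref{thm:cmkoszul}): $\Ann(F(\Delta))$ has a quadratic Gr\"obner basis precisely when $\Delta$ is shellable, and whiskered graphs give shellable independence complexes by Dochtermann--Engstr\"om. Your mention of vertex decomposability is in the right neighborhood (it implies shellability), but you then drift into a speculative description of the quadratic relations rather than invoking the theorem that actually does the work.

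\textbf{Failure of WLP.} This is where your proposal diverges most seriously. You propose constructing explicit bipartite-exchange syzygies and arguing that they force a rank defect of $\mu_\ell$, but you never write down a concrete relation, never prove linear independence after passing to $S/J$, and you invoke the Roller Coaster phenomenon, which plays no role in this part of the paper. Your explanation of the threshold $n\geq 12$ (``smallest size at which the dimension of these syzygies provably exceeds the expected corank'') is also wrong. The paper's argument is different and simpler: since $A$ is the Nagata idealization of $A(G)$, its Hilbert function is $\dim A_i = \dim A(G)_i + \dim A(G)_{n+1-i}$, hence symmetric, so $\dim A_{i-1}\geq \dim A_i$ once $i\geq (n+2)/2$. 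If $A$ had WLP, $\times L$ would be surjective in degree $i=\lceil (2n+2)/3\rceil$, and this surjectivity restricts to a surjection $A(G)_{i-1}\to A(G)_i$. But Corollary~\ref{c:everythingfails} (built on the explicit kernel element $f=\prod(y_{2i-1}-y_{2i})\prod(x_j-y_j)$ for the transpose differentiation map in Theorem~\ref{thm:not_surj}) shows this restricted map fails surjectivity whenever $\alpha(H)\geq n/3+2$. For bipartite $H$, $\alpha(H)\geq n/2$, and $n/2 \geq n/3 + 2$ is a purely arithmetic inequality holding for $n\geq 12$. That is the entire role of the threshold.
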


The absence of the weak Lefschetz property in the Perazzo algebras from~\cref{t:perazzointro} is inherited from their underlying monomial algebras.
These monomial algebras are constructed as artinian reductions of quotients by edge ideals.  
A connection between the Lefschetz properties of edge ideals and invariants of the underlying graphs have been searched for in several recent papers, e.\,g.\ \cites{MM2016, MNS2020, NT2024, T2021}. In the present paper, we focus on edge ideals of whiskered graphs. This is a particularly appealing class, as they define Cohen-Macaulay ideals, and the artinian algebras they define are level. Moreover, whiskered graphs have also been studied from the perspective of piecewise linear (PL) topology, where their independence complexes are known to be simplicial balls or spheres. Our results suggest a connection between the independence number of the graph before whiskering, and the Lefschetz properties. We prove that whiskering a complete graph gives an algebra with the strong Lefschetz property under any monomial artinian reduction. On the other hand, whiskering graphs that are not complete seldom produces algebras satisfying even the weak Lefschetz property. 

\begin{conjecture}\label{c:conjectureintro}
    Let $G$ be a graph with independence number at least $3$ and $I$ the edge ideal of its whiskered graph $H = w(G)$. Then the algebra
    $$
    A(H) = \frac{\K[x_1, \dots, x_{2n}]}{I + (x_1^2, \dots, x_{2n}^2)} 
    $$
    does not have the weak Lefschetz property.
\end{conjecture}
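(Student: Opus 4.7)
The plan is to produce, for a generic linear form $\ell \in A(H)_1$, an element witnessing failure of some multiplication map $\times \ell : A(H)_k \to A(H)_{k+1}$ to have the maximal possible rank. A $\K$-basis of $A(H)$ is indexed by the independent sets of $H = w(G)$, so each basis monomial has the form $x_S \prod_{v \in T} y_v$ with $S \subseteq V(G)$ independent in $G$ and $T \subseteq V(G) \setminus S$; in particular $A(H)$ is level of socle degree $n = |V(G)|$. The key algebraic observation is that in $A(H)$ the element $Z_v := y_v - x_v$ satisfies $(\alpha_v x_v + \beta_v y_v) \cdot Z_v = 0$ for every vertex $v$, as a direct consequence of the relations $x_v y_v = x_v^2 = y_v^2 = 0$. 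Writing $\ell_v := \alpha_v x_v + \beta_v y_v$, it follows that for any $S \subseteq V(G)$, $\ell \cdot Z_S = \bigl(\ell - \sum_{v \in S} \ell_v\bigr) \cdot Z_S$, where $Z_S := \prod_{v \in S} Z_v$.

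By the hypothesis on the independence number, fix three pairwise non-adjacent vertices $a, b, c \in V(G)$. The subalgebra of $A(H)$ generated by $\{x_a, y_a, x_b, y_b, x_c, y_c\}$ is isomorphic to $B^{\otimes 3}$, where $B := \K[x,y]/(x^2, y^2, xy)$, since independence of $\{a, b, c\}$ forces no further relations. A direct computation shows that $B^{\otimes 3}$, whose Hilbert series is $(1+2t)^3$, fails WLP already at the top: multiplication by a generic $\ell'$ from degree $2$ to degree $3$ has rank $7 < 8$, with one-dimensional cokernel represented by $Z_{\{a,b,c\}}$. When $G$ is edgeless this obstruction lifts directly: one checks (using the alternating-sign functional on $A(H)_n$) that $Z_{\{a, b, c\}} \cdot \prod_{v \notin \{a, b, c\}} y_v$ represents a nonzero class in the cokernel of $\times \ell : A(H)_{n-1} \to A(H)_n$, settling the conjecture in this case.

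For $G$ containing edges, the top-degree map $\times \ell : A(H)_{n-1} \to A(H)_n$ is in fact surjective (any edge $\{u,v\}$ provides a ``bad pair'' putting $e_{\{u\}}$ directly in the image, and connectivity of the independent-set graph then forces every $e_T$ to be in the image), so any failure of WLP must occur at an intermediate degree. The natural strategy is to start with a kernel element $\xi_0 \in (B^{\otimes 3})_k$ of the restricted map $\times \ell'$ (for instance $(x_a + y_a - x_b - y_b)\, x_c$ in degree $2$), view it inside $A(H)_k$ via the inclusion $B^{\otimes 3} \hookrightarrow A(H)$, and correct it by an element $\kappa \in K_k$, where $K := (x_v, y_v : v \notin \{a, b, c\}) \subseteq A(H)$ is the kernel of the natural projection $A(H) \twoheadrightarrow B^{\otimes 3}$. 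The correction must satisfy $\ell \cdot \kappa = -\bigl(\sum_{v \notin \{a, b, c\}} \ell_v\bigr) \cdot \xi_0$, and whenever such a $\kappa$ exists, $\xi_0 + \kappa$ is a kernel element of $\times \ell$ in $A(H)_k$. The principal obstacle is existence of $\kappa$ for generic $\ell$: this amounts to showing that the ``cross term'' $-\bigl(\sum_{v \notin \{a, b, c\}} \ell_v\bigr) \cdot \xi_0$ lies in the image of $\times \ell$ restricted to $K$, a surjectivity-type condition which appears to depend delicately on the combinatorics of $G$ away from $\{a, b, c\}$. For bipartite $G$ with at least $12$ vertices, \cref{t:perazzointro} supplies an indirect proof by applying the Hessian criterion to the associated Perazzo algebra; extending this to arbitrary $G$ with independence number at least $3$ would likely require either a reduction to the bipartite case or a direct combinatorial analysis of the multiplication structure of $A(H)$, perhaps exploiting the vertex-decomposable structure of the independence complex of $H = w(G)$.
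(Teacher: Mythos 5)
This statement is labeled a \emph{conjecture} in the paper, and the paper does not prove it in general; it only establishes special cases. Concretely, \cref{c:everythingfails} proves the conjecture under the extra hypothesis $\alpha(G)\ge n/3+2$, and \cref{rmk:bipartite_notWLP} deduces the bipartite case from this together with finitely many computations. Your proposal is also, by its own admission, not a proof: you explicitly flag the existence of the correcting element $\kappa$ as ``the principal obstacle.'' So both you and the paper leave the general case open, and the honest framing is that you are comparing two partial strategies.

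On the overlap: your $Z_v = y_v - x_v$ idea and the observation that $\ell_v Z_v = 0$ is precisely the mechanism behind the element $f$ that the paper uses in the proof of \cref{thm:not_surj}, where the factor $\prod_{j\in C}(x_j-y_j)$ is (up to sign) your $Z_C$. The difference is that the paper does not try to lift an obstruction from the subalgebra $B^{\otimes 3}$ and then correct it; instead it multiplies $Z_C$ by the additional ``pairing'' factor $\prod_{i=1}^{\lfloor k/2\rfloor}(y_{2i-1}-y_{2i})$, built from the vertices \emph{outside} the independent set $C$. This single explicit element kills the derivative with respect to $L$ on the nose (\cref{l:transpose}), so there is no correction problem at all; the price is that the obstruction sits in the intermediate degree $\lfloor(n+|C|)/2\rfloor$ rather than the top degree. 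To turn non-surjectivity into failure of WLP the paper then invokes a genuinely graph-theoretic input, \cref{l:decrease} (Levit--Mandrescu), guaranteeing $\dim A_{i-1}\ge\dim A_i$ from $i\ge(2n+2)/3$ onward; combining the two ranges is exactly what forces the hypothesis $\alpha(G)\ge n/3+2$. Your plan never addresses this Hilbert-function inequality, and without something like it a kernel element of $\times\ell$ does not by itself contradict WLP. So the two genuine gaps in your proposal relative to the paper's partial result are: (1) the unproved existence of $\kappa$, which the paper sidesteps by building the kernel element globally via the $y$-pairing trick, and (2) the absence of any control on $\dim A_{i-1}$ versus $\dim A_i$. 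Finally, a small factual correction: the paper does not use a Hessian criterion in \cref{thm:Perazzo_no_WLP}; it reduces surjectivity of the Perazzo multiplication map to surjectivity of the restricted map $\times L'$ on $A(G)$ and appeals again to \cref{c:everythingfails}.
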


We prove~\cref{c:conjectureintro} in the case where $G$ is bipartite, and as a consequence, for every bipartite graph on $n \geq 12$ vertices we are able to construct an artinian Gorenstein Koszul algebra failing the weak Lefschetz property. Moreover, we note that assuming $G$ does not contain induced triangles, squares or pentagons, it is known~\cite[Corollary 5]{FHN1993} that the algebra $A(G)$ is level if and only if $G$ is whiskered. In particular, if ~\cref{c:conjectureintro} is true, it poses strong restrictions on the weak Lefschetz property of artinian monomial algebras defined by quadratic ideals. 
 
Finally, we note that the results in this paper lead to questions that are almost opposite to previous questions in the literature. In~\cite{MN2013}, Migliore and Nagel conjectured that artinian Gorenstein algebras of socle degree $d \geq 3$ presented by quadrics have the weak Lefschetz property. Although counterexamples for this conjecture were already known (see~\cites{GZ2018,MS2020}),~\cref{c:conjectureintro} together with the results in this paper lead us to the following.

\begin{conjecture}
   Let $d \gg 0$ and $\pi$ be a permutation of the numbers $\{1, \dots, \lfloor d/2 \rfloor\}$. Then there exists an artinian Gorenstein Koszul algebra with Hilbert series $\sum_{i = 0}^d h_i t^i$ such that 
    $$
        h_{\pi(1)} < \dots < h_{\pi(\lfloor d/2 \rfloor)}.
    $$
    In other words, the collection of sequences arising from the first half of the Hilbert series of artinian Gorenstein Koszul algebras of large socle degree is unconstrained.
\end{conjecture}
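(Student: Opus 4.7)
The plan is to refine the construction behind \cref{thm:rcintro} so that the resulting Perazzo algebras are additionally Koszul. Since \cref{t:perazzointro} establishes G-quadraticity (hence Koszulness) for the Perazzo algebras arising from whiskered bipartite graphs on $n \geq 12$ vertices, it suffices to show that the Roller Coaster construction of \cref{thm:rcintro} can be carried out using only \emph{bipartite} whiskered graphs once $d$ is large enough. In that case, the two theorems combine: the idealization yields an artinian Gorenstein algebra with the prescribed unconstrained Hilbert series, while \cref{t:perazzointro} provides the Koszul property for free.

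First, I would inspect the proof of \cref{thm:rcintro} to extract the family of graphs used to realize each permutation $\pi$ of $\{1, \dots, \lfloor d/2 \rfloor\}$. The Cutler--Pebody construction underlying the Roller Coaster theorem for well-covered graphs is typically assembled from disjoint unions of complete bipartite graphs and their whiskerings, all of which are bipartite. If the proof of \cref{thm:rcintro} follows this pattern, then the graphs used are already bipartite and nothing further is needed to invoke \cref{t:perazzointro} beyond checking the vertex count.

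Second, I would verify that these bipartite graphs have at least $12$ vertices. If the basic construction falls below this threshold for some $\pi$, I would pad the graph by taking a disjoint union with a controlled bipartite component (for instance, a sufficiently large complete bipartite graph) whose effect on the independence sequence amounts to a predictable shift of the Hilbert series coefficients. After compensating for this shift by reindexing $\pi$, one recovers the prescribed shape of the first half of the Hilbert series. The hypothesis $d \gg 0$ provides exactly the room needed to absorb such padding while leaving the permutation pattern intact on the first half.

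The main obstacle I expect is the possibility that the Roller Coaster construction, as adapted to the Gorenstein setting, uses non-bipartite building blocks in an essential way, for example in order to produce certain jumps in the independence sequence. In that case, one would need to find bipartite substitutes realizing the same jumps, most likely by iterating whiskerings or by designing bipartite gadgets whose independence polynomial produces the missing values. A secondary, more technical issue is to track how the Nagata idealization interacts with disjoint unions of whiskered bipartite graphs, so that the idealized algebra remains Gorenstein with a Hilbert series fitting the prescribed pattern. Once these two points are controlled, Gorensteinness follows from the idealization construction and Koszulness from \cref{t:perazzointro}, establishing the conjecture.
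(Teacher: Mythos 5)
This statement is posed as a \emph{conjecture} in the paper; no proof is given, so there is no proof of the paper's own to compare your attempt against. Your proposal is an attempt at the open problem, and it contains a genuine gap rather than a complete argument.

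The gap is exactly the step you yourself flag as ``the main obstacle'' and then do not resolve. Your plan is to re-run the proof of \cref{t:perazzo-unconstrained} while insisting that the $\varepsilon$-certificates supplied by \cref{thm:characterization-approximate} be whiskered bipartite graphs, so that Koszulness (indeed G-quadraticity) follows from \cref{thm:cmkoszul} via shellability of whiskered independence complexes (\cref{thm:whiskeredshellable}). But the Cutler--Pebody certificates are general well-covered graphs, and nothing in the paper or in your argument shows that the same approximate independence polynomials can be realized within the much more constrained class of whiskered bipartite graphs. A whiskered graph of independence number $n$ has exactly $2n$ vertices, and its independence polynomial is obtained from that of the base graph $H$ on $n$ vertices by the rigid transform
\[
I(w(H),t) \;=\; (1+t)^{n}\, I\!\left(H,\tfrac{t}{1+t}\right),
\]
which imposes real restrictions on the achievable coefficient sequences. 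Whether the ratio conditions of \cref{thm:characterization-approximate} can be approximated inside this class is precisely the open content of the conjecture; asserting that the Cutler--Pebody building blocks are ``typically'' whiskerings of bipartite graphs is not a verification, and if it were routine the authors would have stated this as a theorem rather than a conjecture.

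The secondary devices you sketch do not close the gap. Since whiskering commutes with disjoint unions and $I(G_1\sqcup G_2,t)=I(G_1,t)\,I(G_2,t)$, padding by a disjoint bipartite component multiplies independence polynomials rather than shifting coefficients, so the ``reindexing $\pi$'' step does not go through as described. The ``bipartite gadgets'' and ``iterated whiskerings'' are named but not constructed. Note also that the route the authors themselves hint at after \cref{t:perazzo-unconstrained} is different from yours: they point to the facet-count criteria of \cite{TY2006} and \cite{DNSTW2024} guaranteeing Cohen--Macaulayness (indeed vertex decomposability) of a pure complex, which, if verified for the certificate complexes, would give Koszulness via \cref{thm:cmkoszul} without any bipartiteness or whiskering assumption. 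Neither route is carried out, which is why the statement remains open.
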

\section{Background}

\subsection{Simplicial complexes and graphs}

A \emph{simplicial complex} $\Delta$ on vertex set $V$ is a collection of subsets (called \emph{faces}) of $V$ such that if $\tau \subset \sigma \in \Delta$, then $\tau \in \Delta$. Maximal faces (under inclusion) of $\Delta$ are called \emph{facets}. If the facets of $\Delta$ are $F_1, \dots, F_s$, we write $\Delta = \langle F_1, \dots, F_s\rangle$. The \emph{dimension} of a face $\sigma$ of $\Delta$ is defined as $\dim \sigma = |\sigma| - 1$, and the dimension of $\Delta$ is $\dim \Delta = \max(\dim \sigma \st \sigma \in \Delta)$. If every facet of $\Delta$ has the same dimension, we say $\Delta$ is \emph{pure}. 

One dimensional simplicial complexes are called \emph{graphs}. Two vertices $u, v$ in a graph $G$ are said to be \emph{adjacent} if $\{u, v\} \in G$. A set $S$ of vertices of $G$ is said to be \emph{independent} if every pair of vertices in $S$ is not adjacent. Since subsets of independent sets are independent, the collection of all independence sets of $G$ is a simplicial complex $\Ind(G)$ called the \emph{independence complex of $G$}.
The \emph{independence sequence} of $G$ is the sequence $(i_0(G), \dots, i_s(G))$ where $i_j(G)$ denotes the number of independent vertex sets of $G$ of size $j$, where by convention $i_0(G) = 1$. The \emph{independence polynomial} of $G$ is the generating function of the independent set sequence of $G$, in other words, it is the polynomial $i_0(G) + i_1(G) t + \dots + i_s(G) t^s$. The \emph{independence number} of $G$, denoted by $\alpha(G)$, is the size of a maximal independent set of $G$. A graph is said to be \emph{well-covered} if every maximal independent set of $G$ has the same size, and \emph{very well-covered} if $G$ has $2n$ vertices and is a well-covered graph with $\alpha(G) = n$.

Given an arbitrary graph $G$ on vertex set $[n] = \{1, \dots, n\}$, one way to produce a very well-covered graph from $G$ is through a process called \emph{whiskering}. The whiskered graph $w(G)$ is defined as a graph on vertex set $[2n]$, where $\{i, j\} \in w(G)$ if either $\{i, j\} \in G$, or $j = i + n$. We will need the following facts from whiskered graphs. We avoid the definition of a shellable complex since we won't need it throughout this paper.

\begin{theorem}[{\cite[Theorem 4.4]{DE2009}}]\label{thm:whiskeredshellable}
    Let $G$ be a whiskered graph. Then $\Ind(G)$ is shellable, and in particular Cohen-Macaulay.
\end{theorem}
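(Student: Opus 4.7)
The plan is to prove the stronger statement that $\Ind(G)$ is \emph{vertex decomposable} in the sense of Provan--Billera, which implies both shellability and (since the complex is pure) Cohen--Macaulayness. The argument will proceed by induction on $|V(H)|$, where $G = w(H)$. Write $V(G) = V(H) \sqcup \{w_v : v \in V(H)\}$, where $w_v$ denotes the whisker attached to $v$; the base case $V(H) = \emptyset$ is immediate since $\Ind(G)$ is a simplex.

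For the inductive step, I would fix any $v \in V(H)$ and show it is a shedding vertex of $\Ind(G)$. Since $N_G[v] = \{v, w_v\} \cup N_H(v)$, the graph $G \setminus N_G[v]$ equals $w(H \setminus N_H[v])$ together with the now-isolated whiskers $\{w_u : u \in N_H(v)\}$. Isolated vertices of a graph correspond to cone points of the independence complex, so $\operatorname{link}_{\Ind(G)}(v) = \Ind(G \setminus N_G[v])$ is an iterated cone over $\Ind(w(H \setminus N_H[v]))$, which is vertex decomposable by induction. The deletion is similarly well-behaved: $w_v$ becomes isolated in $G \setminus v$, so $\Ind(G) \setminus v = \Ind(G \setminus v)$ is the cone with apex $w_v$ over $\Ind(w(H \setminus v))$, again vertex decomposable by induction. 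The shedding condition---that no facet of the link is a facet of the deletion---is automatic: every facet of the link avoids $w_v$ (removed together with $N_G(v)$), while every facet of the deletion contains $w_v$ (as the cone apex).

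The main ``obstacle'' is really just the bookkeeping of which whiskers become isolated when removing $N_H[v]$ versus removing only $v$, together with the standard fact that taking cones preserves vertex decomposability. Once that setup is in place the induction closes cleanly, and shellability plus Cohen--Macaulayness follow from vertex decomposability by well-known implications. An alternative route would be to build an explicit shelling order of the facets $S_T = T \cup \{w_v : v \notin T\}$, indexed by independent sets $T$ of $H$, using any linear extension of reverse inclusion; but the vertex-decomposability proof is shorter and yields the stronger conclusion.
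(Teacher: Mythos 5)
Your proof is correct, but note that the paper does not actually prove this statement---it cites it directly from Dochtermann--Engstr\"om \cite[Theorem 4.4]{DE2009} as a known fact, so there is no ``paper's own proof'' to compare against line by line. What you have supplied is a self-contained argument via vertex decomposability (in the spirit of Provan--Billera and of Woodroofe's treatment of graph independence complexes), which is in fact \emph{stronger} than the cited shellability statement: pure vertex decomposable implies shellable implies Cohen--Macaulay, and since whiskered graphs are well-covered, $\Ind(w(H))$ is pure, so the implications apply. Your bookkeeping of the link and deletion is right: $\operatorname{link}_{\Ind(G)}(v) = \Ind(G \setminus N_G[v])$, and removing $N_H[v]$ from $H$ leaves the whiskers $\{w_u : u \in N_H(v)\}$ isolated, giving an iterated cone over $\Ind(w(H \setminus N_H[v]))$; similarly $\operatorname{del}_{\Ind(G)}(v) = \Ind(G \setminus v)$ is a cone with apex $w_v$ over $\Ind(w(H \setminus v))$. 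Both are vertex decomposable by induction, and cones preserve vertex decomposability.

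One small imprecision worth flagging: you state the shedding condition as ``no facet of the link is a facet of the deletion,'' whereas the standard condition (equivalent to every facet of $\operatorname{del}_\Delta(v)$ being a facet of $\Delta$) is that no \emph{face} of the link is a facet of the deletion. Fortunately your argument establishes exactly this stronger version: every facet of $\operatorname{del}_\Delta(v)$ contains $w_v$ since $w_v$ is the cone apex, while $w_v \notin V(G)\setminus N_G[v]$ is not even a vertex of the link, so no facet of the deletion can be a face of the link. So the proof goes through as written; just fix the phrasing of the shedding condition if you write this up formally.
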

The \emph{edge ideal} $I(G)$ of a graph $G$ on vertex set $[n]$ is an ideal of the polynomial ring $\K[x_1, \dots, x_n]$ over a field $\K$, defined as 
$$
    I(G) = (x_i x_j \ | \ \{i, j\} \in G).
$$
In the case where $H = w(G)$ for some graph $G$ on vertex set $[n]$, we define the edge ideal of $H$ as $I(H) = I(G) + (x_1y_1, \dots, x_n y_n) \in \K[x_1, \dots, x_n, y_1, \dots, y_n]$. That is, the variables $y_1, \ldots y_n$ correspond to the vertices added in when whiskering the graph. 

More generally, given any simplicial complex $\Delta$ on vertex set $[n]$, the ideal 
$$
    I_\Delta = (x_{i_1} \dots x_{i_s} \ | \  \{i_1, \dots, i_s\} \not \in \Delta) \subseteq \K[x_1, \dots, x_n]
$$
is called the \emph{Stanley-Reisner ideal of $\Delta$}. Note that $I(G) = I_{\Ind(G)}$. Simplicial complexes that arise as independence complexes of graphs are called \emph{flag complexes}, and can be characterized as complexes $\Delta$ such that every minimal nonface of $\Delta$ has size $2$.

\subsection{Artinian algebras and the Lefschetz properties} 
A homogeneous ideal $I$  of the polynomial ring $R=\K[x_1, \ldots, x_n]$ defines a graded algebra 
\[
A=R/I = \bigoplus_{i\ge 0} A_i.
\]
The \emph{Hilbert series} of $A$ if the formal power series $\sum_{i\ge 0}h_it^i$ where $h_i=\dim_{\K}A_i$. 
In this paper we study artinian graded algebras, which means that the Hilbert series is a polynomial. In this case we will also refer to the \emph{$h$-vector} $(h_0, h_1, \ldots, h_d)$. 
The \emph{socle} of the algebra $A$ is the vector space $0:(x_1, \ldots, x_n)$, that is the set of $f\in A$ such that $x_if=0$ for all $i$. We note that the last nonzero graded component $A_d$ of an artinian algebra is always contained in the socle. If $A_d$ is the whole socle then the algebra $A$ is called \emph{level}, in which case we refer to the number $d$ as the \emph{socle degree}.  
If the socle is a one dimensional $\K$-space then the algebra is \emph{Gorenstein}. The Hilbert series of an artinian Gorenstein algebra satisfies the symmetry condition $h_{i}=h_{d-i}$. 

A graded algebra $A=R/I$ is said to be \emph{G-quadratic} if the ideal $I$ has a quadratic Gröbner basis. Moreover, $A$ is \emph{Koszul} if the base field $\K$ has a linear free resolution as an $A$-module. By a result in \cite{F1999} every G-quadratic algebra is Koszul. 

An artinian algebra $A=\bigoplus_{1=0}^d A_i$ has the \emph{weak Lefschetz property (WLP)} if there exists a linear form $L\in A_1$ such that all linear maps $\times L:A_i \to A_{i+1}$ defined by multiplication by $L$ have maximal rank. Here having maximal rank means being injective or surjective. Similarly, $A$ has the \emph{strong Lefschetz Property (SLP)} if  there exists a $L\in A_1$ such that all linear maps $\times L^s:A_i \to A_{i+s}$ defined by multiplication by a power $L^s$ have maximal rank. 

If a map $\times L:A_i \to A_{i+1}$ is surjective then so is $\times L:A_j \to A_{j+1}$ for any $j>i$. If $A$ is level, then it also holds that if $\times L:A_i \to A_{i+1}$ is injective then so is $\times L:A_j \to A_{j+1}$ for any $j<i$ (see for example~\cite[Proposition 2.1]{MMN2011}). 

If $A=R/I$ where $I$ is a monomial ideal, then $A$ has the WLP/SLP if and only if the definition is satisfied when taking $L=x_1 + \dots + x_n$ (see for example~\cite[Proposition 2.2]{MMN2011}). 

Given a simplicial complex $\Delta$ on vertex set $[n]$ and an $n$-tuple of positive integers $\bar d = (d_1, \dots, d_n)$, let 
$$
    A(\Delta, \bar d) = \frac{\K[x_1, \dots, x_n]}{I_\Delta + (x_1^{d_1}, \dots, x_n^{d_n})}.
$$
When $\bar d = (2, \dots, 2)$ this construction gives the usual Stanley-Reisner ring, and we simply write $A(\Delta)$. In the case where $\Delta = \Ind(G)$, we use the lighter notation $A(G, \bar d)$ instead of $A(\Ind(G), \bar d)$. Since there are few examples where we consider $\Delta$ a one-dimensional complex, this ambiguity should not cause confusion.
Finally, for a whiskered graph $H = w(G)$ on $2n$ vertices, and $\bar d = (d_1, \dots, d_n)$, we define
$$
    A(H, \bar d) = \frac{\K[x_1, \dots x_n, y_1, \dots, y_n]}{I(H) + (x_1^{d_1}, \dots, x_n^{d_n}, y_1^{d_1}, \dots, y_n^{d_n})}.
$$

Several combinatorial aspects of the complex $\Delta$ can be translated to algebraic properties of the algebra $A(\Delta)$. For example $A(\Delta)$ is level if and only if $\Delta$ is pure, and the Hilbert series of $A(\Delta)$ is given by the polynomial $1 + f_0 t + \dots + f_d t^{d + 1}$, where $f_i$ is the number of faces of $\Delta$ of dimension $i$. In particular, a graph $G$ is well-covered if and only if the algebra $A(G)$ is level, and the Hilbert series of $A(G)$ is given by the independence polynomial of $G$. For a proof of these statements see for example~\cite[Proposition 2.3]{DN2021}.

\section{Lefschetz properties of whiskered graphs}\label{sec:Lefschetz}

In this section we study the Lefschetz properties of artinian algebras $A(w(G),\bar{d})$ associated to whiskered graphs. We will start by considering the whiskering of the complete graph $K_n$. Throughout this section we let $L$ denote the sum of the variables in the corresponding ring.

\begin{lemma}\label{lemma:block_matrix}
    The multiplication maps $\times L^s: A(w(K_n),\bar{d})_i \to A(w(K_n),\bar{d})_{i + s}$ have a block matrix representation
    $$ 
        \begin{bmatrix}
            M_i(n) & 0 \\
            *   & N_i(n)
        \end{bmatrix}  .
    $$
    The block submatrix $M_i(n)$ represents the multiplication map $\times L^s: A_i \to A_{i + s}$, where $A =$ $\K[z_1, \ldots, z_n]/(z_1^{d_1}, \dots, z_n^{d_n})$. The second submatrix 
    $$
        N_i(n) = 
        \begin{bmatrix}
            T_{i - 1}^1(n) & 0 & \dots & 0 \\
            0 & T_{i - 1}^2(n) & 0 & \vdots \\
            \vdots & 0 & \ddots & 0 \\
            0 & \dots & 0 & T_{i - 1}^n(n)
        \end{bmatrix}
    $$
    is a block diagonal matrix where $T_{i}^j(n)$ is the matrix that represents the multiplication map $\times L^s: B_{i} \to B_{i + s}$ where $B = \K[z_1, \dots, z_n]/(z_1^{d_1'}, \dots, z_n^{d_n'})$ with $d_k'=d_k$ if $k \ne j$ and $d_j'=d_j-1.$
\end{lemma}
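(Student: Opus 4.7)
The plan is to choose a monomial $\K$-basis of $A(w(K_n),\bar d)$ that is compatible with the claimed block decomposition, verify by direct inspection how multiplication by $L^s$ respects that decomposition, and then identify each diagonal block with the corresponding quotient by factoring out an appropriate variable.

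First I would record the monomial basis. The relations $x_ix_j=0$ for $i\neq j$ (from the edges of $K_n$), the whisker relations $x_iy_i=0$, together with the pure powers $x_i^{d_i},y_i^{d_i}$, force every nonzero monomial in $A(w(K_n),\bar d)$ to be either a pure $y$-monomial $\prod y_i^{b_i}$ with $0\le b_i<d_i$, or, for a unique index $j$, of the form $x_j^{a}\prod_{i\neq j}y_i^{b_i}$ with $1\le a<d_j$ and $0\le b_i<d_i$. Let $V_0$ be the span of the pure $y$-monomials and $V_j$ the span of those involving $x_j$, and list the basis in the order $V_0,V_1,\dots,V_n$.

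Second I would verify the claimed block shape. For $j\ge 1$, multiplying any element of $V_j$ by $x_k$ with $k\neq j$ or by $y_j$ kills it, while multiplying by $x_j$ or by $y_i$ with $i\neq j$ keeps it inside $V_j$; hence $L^s\cdot V_j\subseteq V_j$, which simultaneously produces the zero upper-right block and the block-diagonal shape of $N_i(n)$. Multiplying a pure $y$-monomial by $L$ produces a pure $y$ part (from the $y_i$ summands of $L$) together with a contribution to each $V_j$ (from the $x_j$ summand), which accounts for both $M_i(n)$ and the nonzero lower-left $*$ block.

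Third I would identify the diagonal blocks. For $V_0$, the $V_0$-component of $L^s$ is exactly multiplication by $(y_1+\cdots+y_n)^s$, so the obvious isomorphism $V_0\cong \K[z_1,\dots,z_n]/(z_1^{d_1},\dots,z_n^{d_n})$ sending $y_i\mapsto z_i$ identifies the top-left block with $M_i(n)$. For $V_j$, every basis element is uniquely $x_j$ times a monomial in the variables $x_j,y_1,\dots,\widehat{y_j},\dots,y_n$ subject to $x_j^{d_j-1}=0$ and $y_i^{d_i}=0$ for $i\neq j$; this yields a degree-shifted isomorphism $(V_j)_i\cong B_{i-1}$ with $B$ the ring described in the lemma. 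Since all summands of $L$ other than $x_j$ and the $y_i$ with $i\neq j$ annihilate $V_j$, the $L$-action on $V_j$ corresponds under this isomorphism to multiplication by the sum of the variables of $B$, which identifies the block $T_{i-1}^j(n)$.

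The only point requiring a little care is that the degree-shift identification $V_j\cong x_j\cdot B$ is compatible with the $L$-action after the factor of $x_j$ is stripped, but this reduces cleanly to the two annihilation identities $x_jx_k=0$ for $k\neq j$ and $x_jy_j=0$; everything else is routine bookkeeping of monomials once the decomposition $A(w(K_n),\bar d)=V_0\oplus V_1\oplus\cdots\oplus V_n$ is in place.
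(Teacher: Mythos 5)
Your proposal is correct and follows essentially the same approach as the paper: the decomposition $V_0\oplus V_1\oplus\cdots\oplus V_n$ you use is precisely the paper's direct sum decomposition of $A(w(K_n),\bar d)_i$, the invariance $L^s\cdot V_j\subseteq V_j$ comes from the same two relations $x_ix_j=0$ and $x_iy_i=0$, and the identification of each $V_j$ with a degree-shifted copy of a monomial complete intersection matches the paper's display. The only difference is stylistic; you spell out the monomial basis and the degree-shift compatibility more explicitly than the paper does.
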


\begin{proof} 
We can partition the independent sets of $w(K_n)$ in two parts: faces that do not contain any vertex of $K_n$, and faces that contain exactly one vertex of $K_n$. Hence $A(w(K_n),\bar{d})$, considered as $\K$-space, is spanned by monomials not divisible by any $x_i$, and divisible by at most one $x_i$. In fact, we have a direct sum decomposition of the vector space $A(w(K_n), \bar d)_i$ as
\begin{align*}
 \left[ \frac{\K[y_1, \ldots, y_n]}{(y_1^{d_1}, \ldots, y_n^{d_n})} \right]_i & \oplus   x_1  \left[ \frac{\K[x_1, y_2, \ldots, y_n]}{(x_1^{d_1 - 1}, y_2^{d_2}, \ldots, y_n^{d_n})} \right]_{i-1}  \oplus  \cdots \\
 \cdots & \oplus x_i\!\left[ \frac{\K[x_i, y_1, ... \cancel{y_i} ...,  y_n]}{(x_i^{d_i - 1}, y_1^{d_1}, ... \cancel{y_i^{d_i}}..., y_n^{d_n})} \right]_{i-1}  \!\!\!\! \oplus \dots \oplus x_n\!\left[ \frac{\K[x_n, y_1, \ldots, y_{n-1}]}{(x_n^{d_n - 1}, y_1^{d_1}, \ldots, y_{n-1}^{d_{n-1}})} \right]_{i-1}.
\end{align*}
Note that 
\[
\times L^s \left(x_i\!\left[ \frac{\K[x_i, y_1, ... \cancel{y_i} ...,  y_n]}{(x_i^{d_i - 1}, y_1^{d_1}, ... \cancel{y_i^{d_i}}..., y_n^{d_n})} \right]_{i-1}\right) \subseteq x_i\!\left[ \frac{\K[x_i, y_1, ... \cancel{y_i} ...,  y_n]}{(x_i^{d_i - 1}, y_1^{d_1}, ... \cancel{y_i^{d_i}}..., y_n^{d_n})} \right]_{i-1+s} 
\]
as $x_ix_j=0$ and $x_iy_i=0$ in $A(w(K_n),\bar{d})$, which gives the matrix for the multiplication map $\times L^s$ the desired block structure. 
\end{proof}

We can now use the SLP of monomial complete intersections to conclude that the algebra $A(w(K_n),\bar{d})$ has the SLP. 

\begin{proposition}\label{prop:SLP}
The algebra $A=A(w(K_n),\bar{d})$ has the SLP if the characteristic is zero or greater than $D=d_1 + \dots + d_n-n.$
More precisely, in this case the multiplication map $\times L^s: A_i \to A_{i+s}$ is injective when $2i \le D-s$ and surjective otherwise.
\end{proposition}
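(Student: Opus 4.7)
The plan is to leverage the block lower triangular structure from~\cref{lemma:block_matrix} in order to reduce the SLP of $A(w(K_n),\bar d)$ to the well-known SLP of monomial complete intersections. Concretely, the matrix of $\times L^s \colon A_i \to A_{i+s}$ has diagonal blocks $M_i(n)$ and $N_i(n)$, and since a block lower triangular matrix is injective (respectively surjective) whenever each of its diagonal blocks is, it is enough to analyze these two blocks separately.

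First I would invoke the classical fact that every monomial complete intersection has the SLP in characteristic zero, and also in positive characteristic provided the characteristic exceeds the socle degree. By~\cref{lemma:block_matrix}, the block $M_i(n)$ represents multiplication by $L^s$ on a monomial complete intersection with socle degree $D$, while each block $T_{i-1}^j(n)$ along the diagonal of $N_i(n)$ represents the analogous map on a monomial complete intersection with socle degree $D-1$. Our hypothesis $\mathrm{char}\,\K = 0$ or $\mathrm{char}\,\K > D$ therefore grants the SLP for both families of complete intersections at once, so $M_i(n)$ and every $T_{i-1}^j(n)$ has maximal rank.

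It remains to verify that the injectivity/surjectivity thresholds align. For a monomial complete intersection with socle degree $s_0$, the symmetry and unimodality of the Hilbert function together with the SLP imply that $\times L^s$ in degree $k$ is injective exactly when $2k + s \le s_0$ and surjective exactly when $2k + s \ge s_0$. Applying this with $(k,s_0)=(i,D)$ for $M_i(n)$ and with $(k,s_0)=(i-1,D-1)$ for each $T_{i-1}^j(n)$, a short arithmetic check shows that the hypothesis $2i \le D - s$ forces $2(i-1) + s \le D - 2 < D - 1$, so every diagonal block is injective; conversely, $2i > D - s$ yields $2(i-1) + s \ge D - 1$, so every diagonal block is surjective. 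I do not anticipate a real obstacle here; the only point requiring care is that the single characteristic bound of $D$ comfortably covers both socle degrees $D$ and $D-1$, which it does.
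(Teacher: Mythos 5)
Your proof is correct and follows essentially the same route as the paper: invoke the block lower triangular structure from Lemma~\ref{lemma:block_matrix}, appeal to the SLP of monomial complete intersections of socle degrees $D$ and $D-1$, and verify the injectivity/surjectivity thresholds align. Your uniform arithmetic check (using the criterion $2k+s\le s_0$) is a slightly cleaner presentation than the paper's case split on the parity of $D-s$, but the substance is identical.
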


\begin{proof}
Assume the characteristic of the base field $\K$ is either zero or greater than $D$. 
Recall that a monomial complete intersection $A=\K[z_1, \ldots, z_n]/(z_1^{d_1}, \ldots, z_n^{d_n})$, which has socle degree $D$, has the SLP (\cite{LN2019}, \cite{N2018}, \cite{S1980}). More precisely, the multiplication map $\times L^s:A_i \to A_{i+s}$ is injective for $i \le (D-s)/2$ and surjective for $i \ge (D-s)/2$, where $D= d_1 + \dots + d_n-n$.  Note in particular that the multiplication map is bijective if $D-s$ is even, and $i=(D-s)/2$. 

Now, the block $M_i(n)$ in Lemma \ref{lemma:block_matrix} represents the map $\times L^s: A_i \to A_{i+s}$ where $A=\K[z_1, \ldots, z_n]/(z_1^{d_1}, \ldots, z_n^{d_n})$ which is injective when $i \le (D-s)/2$ and surjective otherwise.

The block $T_{i-1}^j(n)$ in Lemma \ref{lemma:block_matrix}
represents the map 
$\times L^s: B_{i} \to B_{i + s}$
 where 
$B = \K[z_1, \ldots, z_n]/(z_1^{d_1'}, \ldots , z_n^{d_n'})$, with $d_k'=d_k$ for $k \ne j$ and $d_j'=d_j-1$. Note that the algebra $B$ has socle degree $D-1$, so it has the SLP if the characteristic is $p>D$.  
The map $\times L^s: B_{i} \to B_{i + s}$ is then injective when 
\[
i-1 \le \frac{D-1-s}{2} \quad \text{or equivalently} \quad i \le \frac{D-s}{2}+\frac{1}{2}
\]
and surjective when 
\[
i-1 \ge \frac{D-1-s}{2} \quad \text{or equivalently} \quad i \ge \frac{D-s}{2}+\frac{1}{2}.
\]
To argue that the block matrix in Lemma \ref{lemma:block_matrix} has full rank, we need the two blocks to be both injective or both surjective. 
If $D-s$ is even, then both blocks are injective maps for $i\le (D-s)/2$ and surjective maps for $i>(D-s)/2$. If instead $D-s$ is odd, then both blocks are injective maps for $i\le \lfloor(D-s)/2 \rfloor$ and surjective for $i\ge \lceil(D-s)/2 \rceil$. 
\end{proof}

\begin{remark}
    Note that the algebra $A(w(K_n),\bar{d})$ in Proposition \ref{prop:SLP} has the SLP if all the monomial complete intersections appearing as $A$ and $B$, given $\bar{d}$, in Lemma \ref{lemma:block_matrix} have the SLP. For a full classification of monomial complete intersections with the SLP in positive characteristic, see \cite[Theorem 3.4]{N2018}. 
\end{remark}

 In \cref{prop:SLP} we saw that whiskering a graph with independence number 1 produces algebras with the SLP. Next we will see that whiskering a graph with independence number 2 can give rise to algebras with the WLP.

\begin{lemma}\label{lemma:ideal_triag_free}
Let $G$ be a graph with independence number at most $2$. For every non-edge $\{j,k\}$, we have 
    \[
        (I \colon x_j x_k) = (x_i)_{\substack{1 \le i \le n \\ i \ne j,k}} + (x_j^{d_j-1}\!\!,\ x_k^{d_k-1}\!\!,\ y_j,\ y_k,\ y_1^{d_1}, \dots, y_n^{d_n}), \quad \text{where}
    \]
    \[
    I=I(w(G))+(x_1^{d_1}, \ldots, x_n^{d_n}\!,\ y_1^{d_1}, \ldots, y_n^{d_n})
    \]
    for any positive integers $d_1, \ldots, d_n$. 
\end{lemma}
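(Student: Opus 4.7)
The plan is to prove the equality by verifying both inclusions on the level of monomials, exploiting that $I$ is a monomial ideal so the colon ideal $(I : x_jx_k)$ is also monomial. The argument reduces to a routine divisibility check once the role of the hypothesis $\alpha(G) \le 2$ is isolated.

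For the inclusion $\supseteq$, I would check that each listed generator on the right-hand side, when multiplied by $x_jx_k$, lands in $I$. For $x_j^{d_j-1}$ and $x_k^{d_k-1}$ the products $x_j^{d_j}x_k$ and $x_jx_k^{d_k}$ are divisible by pure power generators of $I$. For $y_j$ and $y_k$ one uses the whisker edges: $y_j\cdot x_jx_k=(x_jy_j)x_k\in I$ and symmetrically for $y_k$. The generators $y_i^{d_i}$ lie in $I$ already. The key use of the hypothesis $\alpha(G) \le 2$ occurs when checking that $x_i \in (I : x_jx_k)$ for $i \neq j,k$: since $\{j,k\}$ is itself independent (as it is a non-edge), the set $\{i,j,k\}$ cannot be independent, so either $\{i,j\}$ or $\{i,k\}$ is an edge of $G$, whence $x_ix_jx_k$ is divisible by an edge generator of $I(w(G))$.

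For the inclusion $\subseteq$, I would take a monomial $m$ with $mx_jx_k \in I$ and split into cases according to which type of monomial generator of $I$ divides $mx_jx_k$. If the generator is an edge $x_ax_b$ of $w(G)$ with $a,b \le n$, then since $\{j,k\}$ is a non-edge at least one endpoint, say $a$, lies outside $\{j,k\}$, so $x_a$ divides $m$ and is among the generators on the right. If it is a whisker $x_iy_i$, then $y_i \mid m$ when $i\in\{j,k\}$ and $x_i \mid m$ when $i\notin\{j,k\}$. If it is a pure power $x_i^{d_i}$, then either $i\in\{j,k\}$, in which case $x_j^{d_j-1}$ or $x_k^{d_k-1}$ divides $m$, or $i\notin\{j,k\}$ and $x_i \mid m$. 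Finally, a generator of the form $y_i^{d_i}$ must already divide $m$. In every case $m$ lies in the right-hand ideal.

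I expect no serious obstacle; the only subtlety is the degenerate situation $d_j=1$ (or $d_k=1$), where the stated generator $x_j^{d_j-1}=1$ makes the right-hand side the whole ring, which matches the left-hand side because $x_j\in I$ forces $(I:x_jx_k)=R$. Otherwise the argument is a direct bookkeeping of monomial divisibility, with the combinatorial content concentrated in the one step where $\alpha(G)\le 2$ is invoked.
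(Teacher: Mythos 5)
Your proof is correct and follows essentially the same approach as the paper: both arguments verify the colon ideal directly by monomial divisibility, with the hypothesis $\alpha(G)\le 2$ used precisely to conclude $x_i\in(I:x_jx_k)$ for $i\ne j,k$. You spell out the $\subseteq$ direction explicitly, which the paper leaves implicit, but the underlying argument is identical.
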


\begin{proof}
We let $\{j,k\}$ be a non-edge of $G$ and note that for every vertex $i$ either $\{i,j\}$ or $\{i,k\}$ is an edge of $G$, as otherwise $\{i,j,k\}$ would be an independent set. In particular, $x_i x_j x_k \in I$ and hence $x_i \in (I \colon x_j x_k)$ for every $i\ne j,k$. Moreover, $y_j, y_k \in (I \colon x_i x_j)$ and we conclude 
    $$
       (I \colon x_j x_k) = (x_i)_{\substack{1 \le i \le n \\ i \ne j,k}} + (x_j^{d_j-1}\!\!,\ x_k^{d_k-1}\!\!,\ y_j,\ y_k,\ y_1^{d_1}, \dots, y_n^{d_n}). \qedhere
    $$
\end{proof}

\begin{proposition}\label{prop:WLP}
    Let $G$ be a graph with independence number at most $2$.  The algebra $A=A(w(G),\bar{d})$ has the WLP if the characteristic is zero and the number $D=d_1+ \dots + d_n-n$ is odd. More precisely, the map $\times L: A_i \to A_{i+1}$ is injective for $i \le (D-1)/2$ and surjective otherwise.  
\end{proposition}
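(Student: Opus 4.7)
The plan is to extend the block decomposition from the proof of \cref{prop:SLP} to a three-level decomposition reflecting that every independent set of $G$ has size at most $2$. Since $\alpha(G) \le 2$, every nonzero monomial in $A = A(w(G), \bar d)$ has $x$-support that is either empty, a single vertex, or a non-edge, so $A$ decomposes as a $\K$-vector space into
\[
A \;=\; B \,\oplus\, \bigoplus_{j=1}^{n} x_j \, C^{(j)} \,\oplus\, \bigoplus_{\{j,k\}\,\text{non-edge}} x_j x_k \, D^{(j,k)},
\]
where $B = \K[y_1,\ldots,y_n]/(y_1^{d_1},\ldots,y_n^{d_n})$, each $C^{(j)}$ is the monomial complete intersection in the variables $x_j$ and $y_\ell$ ($\ell \ne j$) with relations $x_j^{d_j-1}$ and $y_\ell^{d_\ell}$ ($\ell \ne j$), and each $D^{(j,k)} \cong R/(I \colon x_j x_k)$ is the monomial complete intersection described by \cref{lemma:ideal_triag_free}. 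These have socle degrees $D$, $D-1$, and $D-2$ respectively.

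Next, I would verify that $\times L$ is block lower-triangular with respect to this decomposition, with each diagonal block given by multiplication by the sum of all variables on the respective complete intersection. The off-diagonal pieces move downward: from $B$ to $\bigoplus_j x_j C^{(j)}$ via the $x_j$ summands of $L$, and from each $x_j C^{(j)}$ to the $x_j x_k D^{(j,k)}$ blocks with $\{j,k\}$ a non-edge via $x_k$. The key observation, and the only place where $\alpha(G) \le 2$ is used, is that the bottom-level summands $x_j x_k D^{(j,k)}$ are closed under $\times L$: for every $\ell \ne j,k$ either $\{j,\ell\}$ or $\{k,\ell\}$ is an edge (else $\{j,k,\ell\}$ would be an independent set of size $3$), forcing $x_j x_k x_\ell = 0$ in $A$.

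In characteristic zero each monomial complete intersection has the SLP, so the injectivity/surjectivity ranges recalled in the proof of \cref{prop:SLP} apply to each of the three diagonal blocks. Using the hypothesis that $D$ is odd, I would then check that all diagonal blocks are simultaneously injective for $i \le (D-1)/2$ and simultaneously surjective for $i \ge (D+1)/2$: the odd-socle blocks $B$ and $D^{(j,k)}$ are bijective at $i = (D-1)/2$ and $i = (D+1)/2$ respectively, while the even-socle $C^{(j)}$ blocks have exactly the single-integer gap between their injectivity and surjectivity ranges. Lower-triangularity then transfers injectivity (respectively surjectivity) of all diagonal blocks to the full map $\times L: A_i \to A_{i+1}$, yielding the precise range statement and the WLP. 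The main delicate point is this parity bookkeeping: the hypothesis that $D$ is odd is what makes the three transition points align at the same integer, since if $D$ were even the middle $C^{(j)}$ blocks would have odd socle degree and their transition would sit a half-integer off.
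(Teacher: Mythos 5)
Your proof is correct, and it takes a genuinely different route from the paper's. The paper establishes the base case $G = K_n$ via \cref{prop:SLP} and then performs induction on the number of non-edges, invoking the exact-sequence criterion of \cite[Theorem 1]{GLN2022} at each step (with \cref{lemma:ideal_triag_free} identifying the colon ideal $R/J$ as a complete intersection). Your approach instead extends the two-level decomposition of \cref{lemma:block_matrix} to a three-level one all at once: the hypothesis $\alpha(G) \le 2$ guarantees the $x$-support of any nonzero monomial has size at most $2$, which makes the decomposition into $B$, $x_jC^{(j)}$ and $x_jx_kD^{(j,k)}$ exhaustive and the bottom level closed under $\times L$. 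The diagonal blocks are then complete intersections of socle degrees $D$, $D-1$, $D-2$, and the parity bookkeeping you describe correctly checks that for $D$ odd the injectivity/surjectivity thresholds align at $(D\pm 1)/2$ so that block lower-triangularity transfers maximal rank. This direct argument is more self-contained (it avoids the black-box citation of \cite{GLN2022} entirely) and makes the $\alpha(G)\le 2$ hypothesis transparent: it is exactly what bounds the hierarchy at three levels. The paper's inductive route, by contrast, isolates the contribution of one non-edge at a time and is closer to how one would attack the problem for larger independence number, where a finite flat decomposition of this kind is not available. Both proofs hinge on the same underlying fact — the SLP of monomial complete intersections in characteristic zero — and the same parity calculation; you simply assemble the pieces in one pass rather than inductively.
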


\begin{proof}
 If $G=K_n$ the result holds by~\cref{prop:SLP}.     We proceed by induction on the number of non-edges of $G$.

 Any graph with independence number at most two can be obtained by removing edges one by one from a complete graph $K_n$, keeping the independence number at most two at each step. So, suppose $G$ is obtained from a graph $H$ by removing an edge $\{j,k\}$.  
    
  Let
  \[\overline{I(G)} =I(w(G))+(x_1^{d_1}, \ldots, x_n^{d_n}\!,\ y_1^{d_1}, \ldots, y_n^{d_n})\]
  and $\overline{I(H)}$ analogously, that is $\overline{I(H)} = \overline{I(G)} + (x_j x_k)$.
By Lemma \ref{lemma:ideal_triag_free} we have
\[
J:=(\overline{I(G)}:x_jx_k)=(x_i)_{\substack{1 \le i \le n \\ i \ne j,k}} + (x_j^{d_j-1}\!\!,\ x_k^{d_k-1}\!\!,\ y_j,\ y_k,\ y_1^{d_1}, \dots, y_n^{d_n}).
\]    
According to \cite[Theorem 1]{GLN2022} the map $\times L : A_i \to A_{i+1}$ is injective (surjective) if both 
\[
\times L: [R/\overline{I(H)}]_i \to [R/\overline{I(H)}]_{i+1} \quad \text{and} \quad \times L: [R/J]_{i-2} \to [R/J]_{i-1} 
\]
are injective (surjective). The map $\times L: [R/\overline{I(H)}]_i \to [R/\overline{I(H)}]_{i+1}$ is injective for $i \le (D-1)/2$ and surjective for larger $i$ by the inductive assumption. 
Since $R/J$ is a monomial complete intersection of socle degree $D - 2$, we know the maps $\times L: [R/J]_{i - 2} \to [R/J]_{i - 1}$ are injective whenever $i - 2 \leq (D - 3)/2$ and surjective when $i - 2 \geq  (D - 3)/2$ (one map is bijective). In particular, they are injective for $i \le (D-1)/2$ and surjective for $i \ge (D+1)/2$.
\end{proof}

The weak Lefschetz algebras in Proposition \ref{prop:WLP} do not have the SLP in general. 

\begin{example}
Let $G$ be the whiskering of the complete graph $K_5$ with one edge removed. The algebra $A=A(G)$ has the WLP by Proposition \ref{prop:WLP}. Computation shows that the Hilbert series of $A$ is 
\[
1+10t+31t^2+43t^3+28t^4+7t^5
\]
but the multiplication map $\times L^2: A_2 \to A_4$ is not surjective. Hence $A$ does not have the SLP. 
\end{example}

The next two examples shows that the assumptions on the graph and the number $D$ in Proposition \ref{prop:WLP} can not be dropped.

\begin{example}
Let $G$ be the whiskering of $K_5$ with a triangle removed. The algebra $A=A(G)$ has the Hilbert series 
\[
1+10t+33t^2+50t^3+36t^4+10t^5
\]
but the multiplication map $\times L: A_3 \to A_4$ is not surjective. Hence $A$ does not have the WLP. 
\end{example}

\begin{example}
Let $G$ be the whiskering of $K_4$ with one edge removed. We can not apply Proposition \ref{prop:WLP} to the algebra $A=A(G)$ as $D=4$ is even. Indeed, the Hilbert series of $A$ is 
\[
1+8t+19t^2+18t^3+6t^4 
\]
but the map $\times L: A_2 \to A_3$ is not surjective, and hence $A$ fails the WLP. 
\end{example}

Our next goal is to show that graphs with large independence number produce artinian algebras that fail the WLP.

\begin{lemma}\label{l:transpose}
    Let $A = A(G)$ for some graph $G$. The transpose of the map $\times L: A_i \to A_{i+1}$  is given by partial differentiation with respect to $L$.
\end{lemma}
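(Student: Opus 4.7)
The plan is to compute both maps explicitly in the natural squarefree monomial basis of $A = A(G)$ and observe that the resulting matrices are literal transposes. Recall that each graded piece $A_j$ has a $\K$-basis consisting of the squarefree monomials $x_\sigma = \prod_{k \in \sigma} x_k$ as $\sigma$ ranges over the independent sets of $G$ of size $j$. I would use this basis to identify $A_j$ with its $\K$-linear dual $A_j^*$, which gives a concrete meaning to the transpose of $\times L$. I would interpret $\partial_L \colon A_{i+1} \to A_i$ as the map sending $x_\sigma$ to $\sum_{k \in \sigma} x_{\sigma \setminus \{k\}}$, noting that this is well-defined because every subset of an independent set is itself independent.

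First, I would compute $L \cdot x_\tau$ for a basis element $x_\tau \in A_i$. Since $x_k^2 = 0$ in $A$ for all $k$, and $x_j x_k = 0$ whenever $\{j,k\}$ is an edge of $G$, the expansion $Lx_\tau = \sum_{k=1}^n x_k x_\tau$ collapses to $\sum_\sigma x_\sigma$, where the sum runs over the independent sets $\sigma \supset \tau$ with $|\sigma| = i+1$. Hence the matrix of $\times L$ in the monomial basis has $(\sigma,\tau)$-entry equal to $1$ if $\tau \subsetneq \sigma$ with $|\sigma| = |\tau|+1$, and $0$ otherwise. On the other hand, the matrix of $\partial_L$ has $(\tau,\sigma)$-entry given by precisely the same indicator. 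Swapping rows and columns gives the desired transpose identity.

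The only subtle point is that formal partial differentiation on the polynomial ring does not descend to $A$, since $\partial_{x_k}(x_k^2) = 2x_k$ is nonzero in $A$. I would sidestep this by defining $\partial_L$ directly on the monomial basis of $A$ as above; this is the standard convention in the Macaulay inverse system framework and agrees with ordinary differentiation applied to squarefree polynomial representatives. Once this convention is fixed, the proof reduces to the bookkeeping check carried out above, and I do not anticipate any real obstacle beyond setting up the notation cleanly.
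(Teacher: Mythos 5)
Your proof is correct and takes essentially the same approach as the paper: fix the squarefree monomial basis of $A(G)$, write out the matrix of $\times L$, and observe that the transpose is exactly partial differentiation with respect to $L$ acting on squarefree representatives. Your extra remark that formal differentiation does not literally descend from the polynomial ring to $A$ (since $\partial_{x_k}(x_k^2) = 2x_k$) is a useful clarification that the paper handles implicitly by writing $[\times L]^\top(m) = \sum_{x_i \in \supp m} m/x_i$, but the underlying argument is the same.
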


\begin{proof}
Let $[\times L]^\top: A_{i+1} \to A_i$ denote the transpose of the linear map $\times L: A_i \to A_{i+1}$. We recall that $A$, as a $\K$-space, has a basis of squarefree monomials. 
    Given a squarefree monomial $m \in A_{i+1}$, we have 
    $$
        [\times L]^\top(m) = \sum_{x_i \in \supp m} \frac{m}{x_i} = \sum_{i=1}^n \frac{\partial m}{\partial x_i} = \frac{\partial m}{\partial L} . \qedhere
    $$
\end{proof}

\begin{theorem}\label{thm:not_surj}
    Let $G$ be a graph on $n$ vertices, and let $A=A(w(G))$. If $C$ is an independent set of $G$ then the map
    $ \times L:  A_{i- 1} \to  A_i$ where  $i= \lfloor (n+|C|)/2\rfloor $
    is not surjective. 
\end{theorem}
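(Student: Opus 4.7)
The strategy is to use duality: by Lemma~\ref{l:transpose}, the transpose $[\times L]^\top : A_i \to A_{i-1}$ is computed as $\partial/\partial L$ on squarefree monomials, so proving that $\times L: A_{i-1} \to A_i$ fails surjectivity reduces to exhibiting a nonzero $f \in A_i$ with $\partial f/\partial L = 0$.

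Set $D = [n]\setminus C$, $m = |D|$, and $k = \lfloor m/2 \rfloor$. The first step is to choose a nonzero squarefree polynomial $g$ of degree $k$ in the variables $\{y_d : d \in D\}$ with $\partial g/\partial L' = 0$, where $L' = \sum_{d \in D} y_d$. This is possible because multiplication by $L'$ on the squarefree polynomials in the $y_d$'s cannot send the degree $k-1$ part surjectively onto the degree $k$ part when $\binom{m}{k-1} < \binom{m}{k}$ (true for all $m \ge 2$; the cases $m \le 1$ are trivial with $g = 1$), so dually the kernel of $\partial/\partial L'$ in degree $k$ is nontrivial. Then set
\[
  f \;:=\; \prod_{c \in C}(x_c - y_c)\cdot g,
\]
a polynomial of degree $|C| + k = \lfloor (n+|C|)/2\rfloor = i$.

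Two verifications remain. For $\bar f \ne 0$ in $A$, expand $f$ to see that its monomials have the form $\pm\, x^S\,y^{(C\setminus S)\cup T}$ with $S \subseteq C$ and $T$ in the support of $g$; since $S \subseteq C$ is independent in $G$, the $x$- and $y$-supports are disjoint, and whiskers only connect $x_i$ to $y_i$, every such monomial is nonzero in $A$, and distinct pairs $(S,T)$ give distinct monomials, so no cancellation occurs. For $\partial f/\partial L = 0$ as a polynomial, the pair $\partial/\partial x_c + \partial/\partial y_c$ cancels for each $c \in C$ by antisymmetry of the factor $x_c - y_c$, while the partials at indices $d \in D$ collect into $\prod_{c \in C}(x_c - y_c)\cdot \partial g/\partial L'$, which vanishes by the choice of $g$.

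The main obstacle is ensuring this polynomial identity actually descends to $[\times L]^\top(\bar f) = 0$ in $A$. The two operators only agree when no monomial of $f$ is killed in the reduction $R \to A$; otherwise, the ``missing'' derivative contributions from vanishing monomials create stray terms in $[\times L]^\top(\bar f)$ that the polynomial derivative does not see. The more symmetric-looking choice $f = \prod_{v \in C \cup T}(x_v - y_v)$ for a subset $T \subseteq D$ fails for exactly this reason whenever $C \cup T$ is not independent in $G$. Restricting $g$ to involve only the $y_d$'s (and no $x_d$'s) forces every $x$-support appearing in the expansion of $f$ to lie inside $C$, hence to be independent in $G$, so every monomial survives in $A$ and the polynomial argument transfers cleanly.
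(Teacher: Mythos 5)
Your proof is correct and takes essentially the same approach as the paper: the paper's witness is exactly your $f = \prod_{c\in C}(x_c-y_c)\cdot g$ with the explicit choice $g=\prod_{i=1}^{\lfloor m/2\rfloor}(y_{2i-1}-y_{2i})$ (a telescoping product whose derivative along $L'$ vanishes), whereas you argue the existence of $g$ abstractly via a dimension count. Your final paragraph on when the polynomial identity $\partial f/\partial L=0$ descends to the quotient is a useful clarification of a point the paper leaves implicit.
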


\begin{proof}
    Let $k =n - |C|$. Without loss of generality we may assume that $C = \{x_{k + 1}, \dots, x_n\}$. Let 
    $$
        f = \prod_{i = 1}^{\left\lfloor\frac{k}{2}\right\rfloor}(y_{2i - 1} - y_{2i}) \prod_{j = k + 1}^n (x_j - y_j)
    $$
    and note that $f$ is a nonzero homogeneous polynomial of degree $\lfloor (n+|C|)/2 \rfloor$. 
    Moreover
        \begin{align*}
            \frac{\partial f}{\partial x_i} &= -\frac{\partial f}{\partial y_i} & & \qfor k + 1 \leq i \leq n,  \\
        \frac{\partial f}{\partial y_i} &= - \frac{\partial f}{\partial y_{i + 1}} & & \qfor  1\leq i < 2  \left\lfloor\frac{k}{2}\right\rfloor,   \\
        \frac{\partial f}{\partial y_k} &= 0 & &  \qif  k \ \text{is odd, and} \\
        \frac{\partial f}{\partial x_i} &= 0 & & \qfor  1\leq i \leq k.
        \end{align*} 
         The properties above imply 
        $$
            \sum_{i=1}^n \frac{\partial f}{\partial x_i} + \sum_{i=1}^n \frac{\partial f}{\partial y_i} = 0.
        $$
    In other words, $f$ is in the kernel of the map $ A_i \to A_{i-1}$ given by differentiation w.\,r.\,t.\, the linear form $L$, where $i=\lfloor (n+|C|)/2 \rfloor$. By Lemma \ref{l:transpose} this is the transpose of the multiplication map $\times L : A_{i-1} \to A_i$, which is then not surjective.
\end{proof}

For Theorem \ref{thm:not_surj} to imply failure of the WLP we need to assure $\dim(A_{i-1}) \ge \dim(A_i)$. In order to do this we use a result from graph theory.

\begin{lemma}\label{l:decrease}
Let $G$ be a whiskered graph on $2n$ vertices and $A = A(G)$. Then $\dim A_{i-1} \ge \dim A_i$ if $3i \ge 2n+2$.
\end{lemma}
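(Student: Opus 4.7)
The plan is to reformulate the inequality $\dim A_{i-1} \geq \dim A_i$ as a bound on the expected size of the non-whisker part of a uniformly random independent set of $G$, and to establish that bound using the fact that each whisker vertex has a unique neighbor in $G$.

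Writing $G = w(H)$ for some graph $H$ on vertex set $V = [n]$, and letting $V'$ denote the set of whisker vertices (with $v'$ the whisker paired with $v \in V$), each $S \in \Ind_i(G)$ decomposes uniquely as $S = T \sqcup W$ with $T = S \cap V$ independent in $H$ and $W = S \cap V'$ disjoint from $\{v' : v \in T\}$. Counting the choices for $W$ yields the standard identity
\[
\dim A_i = \sum_{j \geq 0} a_j \binom{n-j}{i-j}, \qquad a_j := i_j(H).
\]
Using the elementary identity $(n-i+1)\binom{n-j}{i-1-j} = (i-j)\binom{n-j}{i-j}$, valid for $i \leq n$, one derives
\[
(n-i+1)\bigl(\dim A_{i-1} - \dim A_i\bigr) = \sum_{j \geq 0} a_j\,(2i - n - 1 - j)\binom{n-j}{i-j} = \dim A_i \cdot \bigl(2i - n - 1 - E[\,|T|\,]\bigr),
\]
where $E$ denotes expectation with respect to the uniform distribution on $\Ind_i(G)$. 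Thus the desired inequality reduces to the bound $E[|T|] \leq 2i - n - 1$.

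The heart of the argument is the universal estimate $E[|T|] \leq i/2$. For each $v \in V$, I would exhibit the injection
\[
\phi_v \colon \{S \in \Ind_i(G) : v \in S\} \hookrightarrow \{S \in \Ind_i(G) : v' \in S\}, \qquad \phi_v(S) := (S \setminus \{v\}) \cup \{v'\}.
\]
This is well-defined because $v'$ has $v$ as its only neighbor in $G$, so replacing $v$ by $v'$ preserves independence; injectivity is immediate. Therefore $\Pr[v \in S] \leq \Pr[v' \in S]$ for each $v \in V$, and summing gives $E[|T|] \leq E[|W|]$. Since $|T| + |W| = i$ identically, this forces $E[|T|] \leq i/2$.

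Finally, the hypothesis $3i \geq 2n+2$ is equivalent to $i/2 \leq 2i - n - 1$, so $E[|T|] \leq i/2 \leq 2i - n - 1$ as required; the case $i > n$ is trivial since $\Ind_i(G) = \emptyset$. The main obstacle is the initial reformulation: the algebraic identity above turns the binomial difference into a sum with the clean factor $(2i - n - 1 - j)$, and recognizing this sum as a multiple of $(2i - n - 1 - E[|T|])$ is the decisive step. Once that is spotted, the whisker injection $\phi_v$ is the natural elementary tool that completes the proof.
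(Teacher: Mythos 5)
Your proof is correct, but it takes a genuinely different route from the paper. The paper simply invokes Theorem~2.4 of Levit--Mandrescu (2007), which asserts $\dim A_{i-1} \ge \dim A_i$ for $i \ge (2\alpha(G)+2)/3$ whenever $G$ is very well-covered, and specializes it to whiskered graphs where $\alpha(G)=n$. You instead give a self-contained combinatorial argument tailored to the whisker structure: the decomposition $\dim A_i = \sum_j i_j(H)\binom{n-j}{i-j}$, the algebraic reformulation $(n-i+1)(\dim A_{i-1} - \dim A_i) = \dim A_i\cdot(2i-n-1-E[|T|])$, and the bound $E[|T|]\le i/2$ via the injection $S\mapsto (S\setminus\{v\})\cup\{v'\}$, which is well-defined precisely because each whisker vertex $v'$ has $v$ as its unique neighbor. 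I verified all the identities and the injection; the argument is sound, including the edge cases $i>n$ (trivial) and the implicit assumption $\dim A_i > 0$ for $i\le n$ (true since the $n$ whisker vertices form an independent set). What the citation buys is brevity and a result valid for the broader class of very well-covered graphs; what your proof buys is a transparent, elementary explanation of \emph{why} the inequality holds for whiskered graphs, making the role of the whisker structure explicit rather than hidden inside a black box. It is plausible your argument is close in spirit to the proof in Levit--Mandrescu, but as written it is a valuable alternative to the paper's bare citation.
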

\begin{proof}
By \cite[Theorem 2.4]{LM2007} the inequality $\dim A_{i-1} \ge \dim A_i$ holds when 
\[
i-1 \ge \frac{2 \alpha(G)-1}{3} \quad \text{or equivalently} \quad i \ge \frac{2 \alpha(G)+2}{3}
\]
if the graph $G$ is very well covered. As pointed out in \cite{LM2007} whiskered graphs are very well covered, and $\alpha(G)=n$. 
\end{proof}

\begin{corollary}\label{c:everythingfails}
    Let $G$ be a graph on $n$ vertices such that
    $        \alpha(G) \geq n/3 + 2.$    Then the algebra $A=A(w(G))$ does not have the WLP. 
    In particular, the multiplication map $\times L: A_{i-1} \to A_i$ fails surjectivity for $\lceil (2n+2)/3 \rceil \le i \le \lfloor(n+\alpha(G))/2 \rfloor$.
\end{corollary}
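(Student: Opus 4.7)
The plan is straightforward: combine Theorem \ref{thm:not_surj} with Lemma \ref{l:decrease}. First, I would fix any integer $i$ in the range $\lceil (2n+2)/3 \rceil \le i \le \lfloor(n+\alpha(G))/2 \rfloor$ and set $|C| = 2i - n$. The upper bound on $i$ forces $2i - n \le \alpha(G)$, and the lower bound forces $2i \ge (4n+4)/3 \ge n$, so $|C|$ is a nonnegative integer at most $\alpha(G)$. Taking $C$ to be any subset of a maximum independent set of $G$ of this size produces an independent set with $\lfloor(n+|C|)/2\rfloor = i$, and Theorem \ref{thm:not_surj} then shows that $\times L: A_{i-1} \to A_i$ is not surjective.

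Next, I would apply Lemma \ref{l:decrease}: the inequality $i \ge \lceil(2n+2)/3\rceil$ is exactly $3i \ge 2n+2$, which yields $\dim A_{i-1} \ge \dim A_i$. Since the source has dimension at least that of the target, a non-surjective map cannot have maximal rank, so the WLP fails at the step $A_{i-1} \to A_i$. This gives both conclusions in the corollary simultaneously: failure of surjectivity at every $i$ in the stated range, and, a fortiori, failure of the WLP for $A(w(G))$.

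The only remaining bookkeeping is to confirm that the range of $i$ is nonempty under the hypothesis $\alpha(G) \ge n/3 + 2$, which reduces to checking $\lceil(2n+2)/3\rceil \le \lfloor(n+\alpha(G))/2\rfloor$ via a brief case analysis on $n \bmod 3$ using the integrality of $\alpha(G)$. The conceptual content --- producing an explicit witness in the cokernel of $\times L$ --- is already contained in Theorem \ref{thm:not_surj}, so I do not expect any substantial obstacle here; this corollary is essentially an assembly of the two preceding results, with the arithmetic verification of the range being the only routine calculation.
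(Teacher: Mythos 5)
Your proposal is correct and follows essentially the same line of reasoning as the paper: combine \cref{thm:not_surj} (to get non-surjectivity) with \cref{l:decrease} (to get $\dim A_{i-1}\ge\dim A_i$), and check the range is nonempty under the hypothesis on $\alpha(G)$. The only cosmetic difference is that you explicitly produce an independent set of size $2i-n$ for each $i$ in the range, whereas the paper applies \cref{thm:not_surj} only at the top index $i=\lfloor(n+\alpha(G))/2\rfloor$ and implicitly relies on the fact (stated in the background) that non-surjectivity of $\times L$ propagates downward in degree; both routes are valid and both verifications reduce to the same short arithmetic the paper carries out.
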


\begin{proof}
By Lemma \ref{l:decrease} $\dim A_{i-1} \ge \dim A_i$ holds when $i \ge (2n+2)/3$. 
    Let $C$ be an independent set of $G$ such that $|C| = \alpha(G)$. Then 
    \[
    \left\lfloor \frac{n+|C|}{2} \right\rfloor = \left\lceil \frac{n+|C|-1}{2} \right\rceil \ge \left\lceil \frac{n+\frac{n}{3}+1}{2} \right\rceil =   \left\lceil \frac{4n+3}{6} \right\rceil=  \left\lceil \frac{2n+2}{3} -\frac{1}{6} \right\rceil= \left\lceil \frac{2n+2}{3} \right\rceil  
    \]
so the inequailty $\dim A_{i-1} \ge \dim A_i $ holds for $i = \lfloor(n+|C|)/2 \rfloor$. By Theorem \ref{thm:not_surj} the map $\times L: A_{i-1} \to A_i$ fails to be surjective for all $\lceil (2n+2)/3 \rceil \le i \le \lfloor(n+|C|)/2 \rfloor$.
\end{proof}

\begin{example}\label{ex:Star_graph}
    Let $H_n$ denote the star graph on vertices $\{1, \ldots, n\}$, which has edge set
    \[
    \{ \ \{1,2\}, \ \{1,3\}, \  \ldots, \{1,n\} \ \}.
    \] 
A computation shows that $A(w(H_4))$ does not have the WLP.     
    Since 
    $$
        \alpha(H_n) = n - 1 \geq \frac{n + 6}{3}
    $$
    when $n \ge 5$
    we may apply \cref{c:everythingfails} to conclude $A(w(H_n))$ fails the WLP for all $n \ge 4$. 
    \end{example}

    \begin{example}\label{ex:Broom_graph}
Let $B_m$ be the {\it broom graph} on $m+3$ vertices 
$$
     \{x_1,x_2,x_3,x_4,\ldots,x_{m},y_1,y_2,y_3\}
$$
and edge set
$$ \{\{y_1,y_2\},\{y_2,y_3\}\} \cup \{\{y_3,x_i\}~
|~ i= {1,\ldots,m}\}.$$
In \cite{CFHNVT2023} it is proved by computation that the algebra $A=A(w(B_m))$ fails the WLP for small values of $m$. As 
\[
\alpha(B_m) = m+1 \ge \frac{(m+3)+6}{3}
\]
    holds for $m \ge 3$ we can conclude by \cref{c:everythingfails} that $A$ always fails the WLP for all $m$. 
\end{example}

Computational evidence suggests that having independence number at least three is in fact enough to ensure failure of the WLP. 

\begin{conjecture}\label{conj:notWLP}
If a graph $G$ has independence number $\alpha(G) \ge 3$ then the algebra $A(w(G))$ does not have the WLP. 
\end{conjecture}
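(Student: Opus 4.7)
Since \cref{c:everythingfails} already handles the range $\alpha(G) \geq n/3+2$, my plan is to extend the argument to the residual regime $3 \leq \alpha(G) < n/3+2$. The core obstruction is a mismatch between the two ingredients used in that corollary: \cref{thm:not_surj} produces a cokernel element of $\times L$ at degree $i = \lfloor(n+\alpha(G))/2\rfloor$, whereas \cref{l:decrease} requires $i \geq \lceil(2n+2)/3\rceil$ to promote non-surjectivity to WLP failure. These thresholds coincide near the boundary of the already-settled case, and diverge otherwise, so one of the two bounds has to be pushed.

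\textbf{Step 1 (higher-degree kernel elements).} The first approach is to extend the polynomial $f$ built in \cref{thm:not_surj}. Any linear form $\sum c_v z_v$ with $\sum c_v = 0$ is annihilated by $\partial/\partial L$, and a product $\ell_1 \cdots \ell_r$ of such forms with pairwise disjoint variable supports remains annihilated even after reducing modulo the squares --- the disjointness is essential, since for example $(y_1-y_2)(y_1-y_3)$ is killed by $\partial/\partial L$ in the polynomial ring but its reduction modulo $y_1^2$ is not. To reach degree $\lceil(2n+2)/3\rceil$ one needs roughly $2n/3$ such length-two factors, consuming about $4n/3$ of the $2n$ available variables. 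Since the $n$ pairwise non-adjacent $y$-variables yield at most $\lfloor n/2 \rfloor$ disjoint factors $(y_i - y_j)$, the remaining $\sim n/6$ factors must involve $x$-variables, and they have to be chosen so that at least one monomial in the expansion has support independent in $w(G)$. The hypothesis $\alpha(G) \geq 3$ should enter precisely here, providing enough slack in the edge structure of $G$ to thread these extra factors through.

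\textbf{Step 2 (sharper Hilbert function bound).} An alternative is to replace \cref{l:decrease} with a whiskered-graph-specific estimate, of the form $h_{i-1} \geq h_i$ at $i = \lfloor(n+\alpha(G))/2\rfloor$, which would let \cref{thm:not_surj} conclude on its own. The shellability of $\Ind(w(G))$ from \cref{thm:whiskeredshellable} gives access to the $h$-vector through a concrete shelling order, and one could try to read off the required inequality from the induced partition of facets. A still more direct route would be to produce, for each independent set of size $i$ in $w(G)$, an explicit injection into the set of independent sets of size $i-1$ whenever $i \geq \lfloor(n+\alpha(G))/2\rfloor$.

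\textbf{Main obstacle.} Step 1 is tight on variables, and its crux is certifying non-vanishing in $A$: the resulting product of linear forms must expand to a $\K$-combination of squarefree monomials with at least one surviving coefficient whose monomial is independent in $w(G)$. This is a genuinely combinatorial problem whose difficulty depends on how the vertices outside an independent triple $\{u,v,w\}$ of $G$ attach to $\{u,v,w\}$ --- and where the constraint $\alpha(G) = \alpha$ stays tight as $n$ grows is exactly the regime in which these attachments become inflexible. Step 2, meanwhile, demands a strengthening of the Levit--Mandrescu inequality from \cite{LM2007} that is specific to the whiskering operation and does not appear to be available in the literature. I expect that a full proof requires case analysis on the neighborhood structure of an independent triple, and it is presumably this combinatorial subtlety that has kept the statement at the level of a conjecture.
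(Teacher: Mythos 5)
The statement you are addressing is, in the paper, precisely a \emph{conjecture} (\cref{conj:notWLP}): the authors explicitly state that computational evidence suggests it, prove it only in the special cases $\alpha(G) \geq n/3 + 2$ (\cref{c:everythingfails}) and $G$ bipartite (\cref{rmk:bipartite_notWLP}), and leave the general statement open. There is therefore no paper proof to compare against, and your write-up --- which correctly identifies the statement as open, isolates the mismatch between the degree $\lfloor(n+\alpha(G))/2\rfloor$ where \cref{thm:not_surj} supplies a cokernel element and the threshold $\lceil(2n+2)/3\rceil$ where \cref{l:decrease} begins to apply, and concludes that the gap widens as $n$ grows with $\alpha(G)$ held at $3$ --- is an accurate diagnosis rather than a proof. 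Your two proposed routes are sensible directions of attack: Step 1 (pushing the kernel element to higher degree) matches the spirit of the paper's own methods, and your observation that factors of the form $\ell_1\cdots\ell_r$ must have pairwise disjoint supports to survive reduction modulo the squares is correct, with the $(y_1-y_2)(y_1-y_3)$ example properly computed. The variable-budget count you perform ($\sim 2n/3$ factors needing $\sim 4n/3$ variables, versus only $\lfloor n/2\rfloor$ disjoint $y$-pairs) is also right, and it correctly pinpoints why $\alpha(G)\geq 3$ alone seems too weak to thread the remaining $\sim n/6$ factors through the $x$-side. Step 2 (a whiskered-specific sharpening of the Levit--Mandrescu bound) would indeed require new combinatorics not present in the paper or, as far as I am aware, in the literature. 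In short: you have not proven the conjecture, but you were not going to from the material available, and you have given an honest and technically accurate account of exactly where the paper's machinery stops short and what would need to be supplied.
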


\begin{remark}\label{rmk:bipartite_notWLP}
     Conjecture \ref{conj:notWLP} holds for all bipartite graphs $G$. The independence number of a bipartite graph on $n$ vertices is at least $n/2$. If $n \ge 12$ then this implies the inequality on $\alpha(G)$ in Corollary \ref{c:everythingfails}, and consequently the algebra $A(G)$ does not have WLP. 
     For all smaller bipartite graphs the conjecture has been verified computationally. In addition, computational evidence proves that Conjecture \ref{conj:notWLP} holds for all graphs on at most 7 vertices, as well as for a large number of graphs on 8 vertices.   
\end{remark}

The algebras in Corollary \ref{c:everythingfails} fail the WLP due to failure of surjectivity. However, to prove Conjecture \ref{conj:notWLP} injectivity must be taken into account. 

\begin{example}
Let $G$ be the graph obtained by from $K_8$ removing the edges $\{1,2\}$, $\{1,3\}$, $\{2,3\}$ so that $\{1,2,3\}$ becomes an independent set. The Hilbert series of the algebra $A=A(w(G))$ is
\[1+16t+87t^{2}+243t^{3}+400t^{4}+406t^{5}+251t^{6}+87t^{7}+13t^{8}.\]
The multiplication map $\times L: A_4 \to A_5$ fails to be injective, while multiplication by $L$ has maximal rank in all other degrees. 
\end{example}

We recall that every whiskered graph is very well-covered. A characterization of very well-covered graphs on $2n$ vertices with Cohen-Macaulay edge ideal is given in \cite{CRT2011}. In particular the vertex set can be partitioned into two parts $X=\{x_1, \ldots, x_n\}$ and $Y=\{y_1, \dots, y_n\}$ where  $X$ is a minimal vertex cover and $Y$ is a maximal independent set. In the case of a whiskered graph $w(G)$ the minimal vertex cover $X$ is the vertex set of $G$. One may ask if Conjecture \ref{conj:notWLP} can be generalized to very well-covered graphs with Cohen-Macaulay edge ideal, replacing $G$ by the induced subgraph on $X$. However, the conjecture does not hold in this more general setting, as the next example shows. 

\begin{example}
Let $G$ be the graph illustrated below. 

\begin{center}
\begin{tikzpicture}[scale=1]
 
\fill[fill=black,draw=black] (0,0) circle (.1)
node[label=below:$y_3$] {};
\fill[fill=black,draw=black] (2,0) circle (.1)
node[label=below:$y_4$] {};
\fill[fill=black,draw=black] (4,0) circle (.1)
node[label=below:$y_5$] {};
\fill[fill=black,draw=black] (6,0) circle (.1)
node[label=below:$y_6$] {};

\fill[fill=black,draw=black] (0,1) circle (.1)
node[label=left:$x_3$] {};
\fill[fill=black,draw=black] (2,1) circle (.1)
node[label=left:$x_4$] {};
\fill[fill=black,draw=black] (4,1) circle (.1)
node[label=left:{$x_5$}] {};
\fill[fill=black,draw=black] (6,1) circle (.1)
node[label=right:{$x_6$}] {};
\fill[fill=black,draw=black] (2,2) circle (.1)
node[label=above:$x_1$] {};
\fill[fill=black,draw=black] (4,2) circle (.1)
node[label=above:$x_2$] {};
\fill[fill=black,draw=black] (0,2) circle (.1)
node[label=above:$y_1$] {};
\fill[fill=black,draw=black] (6,2) circle (.1)
node[label=above:$y_2$] {};

\draw (0,0) -- (0,1);
\draw (2,0) -- (2,1);
\draw (4,0) -- (4,1);
\draw (6,0) -- (6,1);
\draw (2,0) -- (0,1);
\draw (4,0) -- (0,1);
\draw (6,0) -- (0,1);
\draw (4,0) -- (2,1);
\draw (6,0) -- (2,1);
\draw (6,0) -- (4,1);

\draw (2,2) -- (4,2);
\draw (2,2) -- (0,1);
\draw (2,2) -- (2,1);
\draw (2,2) -- (4,1);
\draw (2,2) -- (6,1);

\draw (4,2) -- (0,1);
\draw (4,2) -- (2,1);
\draw (4,2) -- (4,1);
\draw (4,2) -- (6,1);

\draw (0,2) -- (2,2);
\draw (4,2)-- (6,2);

\end{tikzpicture}
\end{center}
%
    This is a very well-covered graph satisfying the conditions in~\cite{CRT2011} implying that the edge ideal
    \begin{align*}
(&x_{1}x_{2},\,x_{1}x_{3},\,x_{1}x_{4},\,x_{1}x_{5},\,x_{1}x_{6},\,x_{2}x_{3},\,x_{2}x_{4},\,x_{2}x_{5},\,x_{2}x_{6},\,x_{1}y_{1},\,x_{2}y_{2},\,x_{3}y_{3},\,x_{3}y_{4},\, x_{3}y_{5},\, x_{3}y_{6},  \\
    &\quad x_{4}y_{4},\,x_{4}y_{5},\,x_{4}y_{6},\,x_{5}y_{5},\,x_{6}y_{6},\,x_{5}y_{6})
    \end{align*}
    is Cohen-Macaulay.
    The induced subgraph on the vertices $x_1, \ldots, x_6$ has independence number 4 and the artinian algebra $A(G)$ has the WLP.  
\end{example}

\begin{remark}
    Using~\cite[Theorem 1.1, 1.2]{DN2021} and~\cite[Corollary 4.1]{H2024}, the same arguments as in~\cite[Corollary 3.3]{CFHNVT2023} imply the rank in odd characteristics of the first and last multiplication  maps by the sum of variables of algebras of the form $A(\Delta)$ where $\Delta$ is a pseudomanifold with boundary is equal to the rank in characteristic $0$. In particular, a direct consequence of the results mentioned above and the main results in~\cite{CFHNVT2025} characterize when do these maps have full rank assuming $\Delta$ is the independence complex of a very well-covered graph, or a grafted complex (see~\cite[Definition 7.1]{Fa2005}).
\end{remark}

\section{Simplicial Perazzo forms}\label{sec:Perazzo}
In this section we apply the results from Section \ref{sec:Lefschetz} to construct a family of Koszul Gorenstein artinian algebras failing the WLP.

\begin{definition}[{\bf Perazzo forms}]
    A homogeneous polynomial $F \in \K[x_1, \dots, x_n, u_1, \dots, u_m]$ of degree $d$ is called a \emph{Perazzo form} if 
    $$
        F = x_1 g_1 + \dots + x_n g_n + G \quad \text{where} \ g_1, \ldots, g_n, G \in \K[ u_1, \dots, u_m]
    $$
    and the forms $g_1, \ldots, g_n$ are algebraically dependent and linearly independent. 
\end{definition}%
Note that there is no requirement on the form $G$, and throughout this text we take $G= 0$.

Given a set of monomials $M = \{m_1, \dots, m_s\} \subset \K[y_1, \ldots, y_n]$, the \emph{log matrix} of $M$ is the $s \times n$ integer matrix whose $i$-th row is the exponent vector of $m_i$. We recall the following fact about monomials and log matrices. For a proof of~\cref{lemma:log-matrix} below see e.\,g.\ \cite[Lemma 4.2]{S1996}.

\begin{lemma}\label{lemma:log-matrix}
    Monomials $m_1, \dots, m_s$ are algebraically dependent if and only if the rank of their log matrix is less than $s$. In particular, if $s$ is bigger than the number of variables, the monomials are algebraically dependent.
\end{lemma}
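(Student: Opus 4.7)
The plan is to treat the statement as two implications together with the corollary that $s > n$ forces dependence. Throughout, let $A$ be the $s \times n$ log matrix with rows $a_i = (a_{i1}, \dots, a_{in})$, so that $m_i = y_1^{a_{i1}} \cdots y_n^{a_{in}}$. The substitution homomorphism $\phi: \K[t_1, \dots, t_s] \to \K[y_1, \dots, y_n]$ sending $t_i \mapsto m_i$ encodes the entire problem: the $m_i$ are algebraically dependent precisely when $\ker \phi \ne 0$.

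First I would prove the ``rank $< s$ implies algebraic dependence'' direction constructively. Assume $\operatorname{rank} A < s$. Then the rows $a_1, \ldots, a_s$ satisfy a nontrivial $\mathbb{Q}$-linear relation, and after clearing denominators we obtain integers $c_1, \ldots, c_s$, not all zero, with $\sum_i c_i a_i = 0$. Writing $c_i = c_i^+ - c_i^-$ with $c_i^\pm \in \mathbb{Z}_{\geq 0}$, the equality of exponent vectors gives the monomial identity
\[
\prod_{i=1}^{s} m_i^{c_i^+} \;=\; \prod_{i=1}^{s} m_i^{c_i^-}
\]
in $\K[y_1, \dots, y_n]$. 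The polynomial $P(t_1, \dots, t_s) = \prod_i t_i^{c_i^+} - \prod_i t_i^{c_i^-}$ is nonzero (the two monomials have different multidegrees because the $c_i$ are not all zero) and lies in $\ker \phi$, producing the required algebraic dependence.

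For the converse, assume $\operatorname{rank} A = s$ and suppose for contradiction that $P \in \K[t_1, \dots, t_s]$ is a nonzero polynomial with $P(m_1, \dots, m_s) = 0$. Expand $P = \sum_\alpha c_\alpha t^\alpha$ with finitely many nonzero $c_\alpha \in \K$. Under $\phi$, each term becomes a monomial in the $y_j$ whose exponent vector is $\alpha_1 a_1 + \cdots + \alpha_s a_s$. Since the rows $a_1, \dots, a_s$ are $\mathbb{Q}$-linearly independent, the $\mathbb{Z}$-linear map $\alpha \mapsto \sum \alpha_i a_i$ is injective on $\mathbb{Z}^s$. Therefore distinct exponents $\alpha$ yield distinct monomials $\phi(t^\alpha)$, and the relation $\sum_\alpha c_\alpha \phi(t^\alpha) = 0$ forces every $c_\alpha$ to vanish, contradicting $P \ne 0$. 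The final ``in particular'' assertion is immediate: if $s > n$ then $\operatorname{rank} A \le n < s$, so the first part applies.

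I do not expect any serious obstacle here; the main point to get right is the passage from a $\mathbb{Q}$-linear dependence among rows to an honest polynomial relation via splitting into positive and negative parts, and the observation that the linear independence of the rows over $\mathbb{Q}$ is equivalent to injectivity of the induced map $\mathbb{Z}^s \to \mathbb{Z}^n$, which is what turns distinct multi-indices into distinct monomials.
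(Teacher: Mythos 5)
The paper does not actually prove \cref{lemma:log-matrix}; it cites \cite[Lemma 4.2]{S1996} and moves on. Your argument is correct and is the standard self-contained proof. Both directions are handled cleanly: clearing denominators in a $\mathbb{Q}$-linear relation among rows and splitting into positive/negative parts gives a genuine binomial relation $\prod_i t_i^{c_i^+} - \prod_i t_i^{c_i^-}$ in the kernel of the substitution map, and conversely, full row rank over $\mathbb{Q}$ makes $\alpha \mapsto \sum_i \alpha_i a_i$ injective on $\mathbb{Z}^s$, so distinct monomials $t^\alpha$ go to distinct monomials in the $y_j$ and no nonzero polynomial can vanish under $\phi$. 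The one point worth being explicit about (and which you handle implicitly) is that ``rank'' here means rank over $\mathbb{Q}$; this is the right convention since the relation you construct is a binomial with coefficients $\pm 1$, hence an algebraic dependence over any base field, which is exactly what the paper needs when it invokes this lemma.
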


Given a subset $F \subseteq \{1, \ldots, m\}$ let $u_{F}$ denote the monomial $\prod_{j \in F} u_j \in \K[u_1, \ldots, u_m]$. For a pure simplicial complex $\Delta = \langle F_1, \dots, F_s \rangle$ we introduce the \emph{simplicial form}
\begin{equation}\label{eq:1}
F(\Delta) = \sum_{i = 1}^s x_i u_{F_i}  \in \K[x_1, \ldots, x_s, u_1, \ldots, u_m].
\end{equation}

\begin{lemma}[{\bf Simplicial Perazzo forms}]\label{lemma:Simplicial-Perazzo}
    Let $\Delta = \langle F_1, \dots, F_s \rangle$ be a pure $d$-dimensional simplicial complex such that the multiplication map $\times L^d: A(\Delta)_1 \to A(\Delta)_{d + 1}$ is not surjective. Then $F(\Delta)$ is a Perazzo form, which we call
    the \emph{ simplicial Perazzo form} of $\Delta$.
\end{lemma}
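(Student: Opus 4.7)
My plan is to verify the two conditions in the definition of a Perazzo form for the coefficient polynomials $g_i = u_{F_i} \in \K[u_1, \ldots, u_m]$: linear independence and algebraic dependence. Linear independence is immediate: purity of $\Delta$ forces every facet $F_i$ to have size $d+1$, and since the $F_i$ are distinct subsets of $\{1, \ldots, m\}$, the $u_{F_i}$ are distinct squarefree monomials of the same degree, hence $\K$-linearly independent.

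For algebraic dependence, I will invoke \cref{lemma:log-matrix}: it suffices to show that the log matrix $M$ of $(u_{F_1}, \ldots, u_{F_s})$---the $s \times m$ zero/one matrix whose $(i,j)$ entry records whether $j \in F_i$---has rank strictly less than $s$. The idea is to read this off the matrix of the map $\times L^d \colon A(\Delta)_1 \to A(\Delta)_{d+1}$. Purity implies $A(\Delta)_{d+1}$ has basis $\{u_{F_1}, \ldots, u_{F_s}\}$ (since every face of dimension $d$ is a facet), while $A(\Delta)_1$ has basis $\{u_1, \ldots, u_m\}$. Expanding $L^d$ by the multinomial theorem and discarding monomials with repeated variables (which vanish in $A(\Delta)$) gives $u_j L^d = d! \sum_{F_i \ni j} u_{F_i}$, so the matrix of $\times L^d$ in these bases equals $d! \cdot M$.

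By hypothesis $\times L^d$ is not surjective, and $\dim_\K A(\Delta)_{d+1} = s$, so this matrix has rank $< s$. Assuming $d!$ is invertible in $\K$ (in particular in characteristic zero, which is the natural setting here), we conclude $\mathrm{rank}(M) < s$, and \cref{lemma:log-matrix} immediately yields the desired algebraic dependence of $u_{F_1}, \ldots, u_{F_s}$. The only step that needs a little care---and the one I would double-check---is the multinomial expansion identifying the matrix of $\times L^d$ with a scalar multiple of the log matrix; once that dictionary is in place, the conclusion is purely formal.
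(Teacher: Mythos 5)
Your proof is correct and follows the same strategy as the paper: identify the matrix of $\times L^d \colon A(\Delta)_1 \to A(\Delta)_{d+1}$ with (a scalar multiple of) the log matrix of $u_{F_1},\ldots,u_{F_s}$, then invoke \cref{lemma:log-matrix}. The only difference is that the paper delegates the identification to a citation (Theorem~29 of a reference), whereas you derive it directly from the multinomial expansion (picking up the factor $d!$, harmless in characteristic zero), and you also explicitly check linear independence of the $u_{F_i}$, which the paper leaves implicit; both are welcome additions but do not change the essential route.
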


\begin{proof}
By \cite[Theorem 29]{H2024mm} the multiplication map $\times L: A(\Delta)_1 \to A(\Delta)_{d + 1}$ is exactly the log-matrix of $u_{F_1}, \dots, u_{F_s}$. So if the map is not surjective, then by Lemma \ref{lemma:log-matrix} the monomials $u_{F_1}, \dots, u_{F_s}$ are algebraically dependent, which makes form $F(\Delta)$ Perazzo. 
\end{proof}

For a graph $G$ with a pure independence complex we use the short notation $F(G)$ for the simplicial form $F(\Ind(G))$. 

Now, these forms can be used to produce artinian Gorenstein algebras via Macaulay's Inverse System. We briefly recall the general construction here. Given two polynomial rings $R=\K[x_1, \ldots x_n]$ and $S=\K[X_1, \ldots, X_n]$, we define an action $\circ$ of $R$ on $S$ by taking the variables of $R$ as differential operators $x_i=\frac{\partial }{\partial X_i}$.  A form $F \in S$ (not necessarily Perazzo) defines an ideal of $R$
\[
\Ann(F) = \{g \in R \ | \ g \circ F=0 \} 
\]
called the \emph{annihilator ideal of $F$}. A theorem by Macaulay states that a standard graded artinian algebra is Gorenstein if and only if it is isomorphic to a quotient ring $A_F=R/\Ann(F)$ for some form $F \in S$. We say that $F$ is the \emph{Macaulay dual generator} of $A_F$. The socle degree of the artinian Gorenstein algebra $A_F$ equals the degree of $F$. When there is no risk of confusion the polynomial rings $R$ and $S$ may be identified, as they are indeed isomorphic. 

For the next results we need a construction called Nagata idealization. Let $\K$ be a field of characteristic zero and $R$ a standard graded level artinian $\K$-algebra with socle degree $d$ and graded canonical module $\omega$. The \emph{Nagata idealization} $\tilde R$ of $R$ is the algebra $R \ltimes \omega(-d)$, and the underlying $R$-module is $R \oplus \omega(-d)$. Multiplication in $\tilde R$ is given by 
$$
    (a, x) \cdot (b, y) = (ab, ax + by) \qfor a,b \in R \qand x,y \in \omega(-d).
$$
In particular, we may view $R$ as a subring of $\tilde R$ by identifying $(a, 0) \in \tilde R$ with $a \in R$. For more details on this construction see~\cite[Section 4]{S1978} and~\cite[Section 3]{MSS2021}.

The algebras arising from simplicial forms via Macaulay duality have been considered for example in~\cite{GZ2018,DV2023}. In particular, in~\cite{DV2023} D'Ali and Venturello show the following.

\begin{theorem}[{\cite[Corollary 8.2, Proposition 8.3]{DV2023}}]\label{thm:cmkoszul}
    Let $\Delta$ be a pure simplicial complex of dimension $d$, $F(\Delta)$ the polynomial in~\eqref{eq:1} and $A$ the artinian Gorenstein algebra with Macaulay dual generator $F(\Delta)$. Then the following hold
    \begin{enumerate}
        \item The algebra $A$ is Koszul if and only if $\Delta$ is Cohen-Macaulay, and
        \item The annihilator ideal $\Ann(F(\Delta))$ has a quadratic Gröbner basis if and only if $\Delta$ is shellable.
    \end{enumerate}
    In particular, if $G$ is a whiskered graph, the algebra $A_{F(G)}$ is G-quadratic.
\end{theorem}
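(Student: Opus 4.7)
My plan is to analyze $\Ann(F(\Delta))$ by direct apolarity and then to leverage Stanley--Reisner theory to transfer the combinatorial hypotheses on $\Delta$ into homological and Gröbner-theoretic statements about $A_{F(\Delta)}$. Differentiation of $F(\Delta) = \sum_{i=1}^s x_i u_{F_i}$ shows that $\Ann(F(\Delta))$ always contains the following quadrics: all $x_i x_j$ (since $F(\Delta)$ is linear in each $x_i$), all $u_j^2$ together with $u_j u_k$ whenever $\{j,k\}$ is not a face of $\Delta$, and the mixed products $x_i u_j$ whenever $j \notin F_i$. A Hilbert series comparison against the squarefree Stanley--Reisner ring $\K[\Delta]$ shows that when $\Delta$ is Cohen--Macaulay these quadrics exhaust $\Ann(F(\Delta))_2$, so the question becomes whether they generate the full ideal under the appropriate combinatorial hypothesis.

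For part (1), I would identify $A_{F(\Delta)}$ with a Nagata idealization $B \ltimes \omega_B(-d-1)$, where $B$ is an artinian reduction of $\K[\Delta]$ obtained by specializing the $x$-variables and adjoining a linear system of parameters in the $u$-variables; the $x$'s pair with facets and encode the canonical module $\omega_B$, producing exactly the module-extension pattern of an idealization. Under this identification, $\K[\Delta]$ is Koszul iff $\Delta$ is flag (which holds for independence complexes), and Cohen--Macaulay-ness of $\Delta$ is equivalent to $\omega_B$ admitting a linear free $B$-resolution, which is in turn equivalent to $A_{F(\Delta)} = B \ltimes \omega_B(-d-1)$ being Koszul. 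The bridge in both directions is Reisner's criterion, which translates Cohen--Macaulay-ness into vanishings of reduced homologies of links of $\Delta$, matching exactly the $\mathrm{Tor}$ vanishings forced by Koszulness of $A_{F(\Delta)}$.

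For part (2), given a shelling $F_1,\ldots, F_s$ of $\Delta$, I would fix a monomial order refining $x_1 \succ \cdots \succ x_s$ together with a lex order on the $u$'s compatible with the shelling, and argue by induction along the shelling that every element of $\Ann(F(\Delta))$ reduces to zero modulo the quadratic generators above. The shelling property, that $F_i \cap (F_1 \cup \cdots \cup F_{i-1})$ is pure of dimension $d-1$, prevents new higher-degree obstructions from appearing at each stage, since every minimal new face of $F_i$ not contained in the earlier facets contributes a quadratic of the form $x_i u_j$. Conversely, from a quadratic Gröbner basis of $\Ann(F(\Delta))$ one reads off a facet ordering satisfying the shellability condition by inspecting the initial ideal restricted to the mixed quadrics $x_i u_j$. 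The ``in particular'' statement then follows from~\cref{thm:whiskeredshellable}, which guarantees that independence complexes of whiskered graphs are shellable (hence Cohen--Macaulay), yielding both Koszulness and a quadratic Gröbner basis for the annihilator.

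The step I expect to be most delicate is the reverse direction of (1): showing that Koszulness of $A_{F(\Delta)}$ forces Cohen--Macaulay-ness of $\Delta$. A clean approach is to match the Poincaré series of $A_{F(\Delta)}$ against the $f$-vector of $\Delta$ via the Koszul identity $P_{A_{F(\Delta)}}(t) \cdot \mathrm{Hilb}_{A_{F(\Delta)}}(-t) = 1$, and to extract from the resulting numerical identity enough vanishings of reduced homologies of links of $\Delta$ to invoke Reisner's criterion. The main difficulty lies in controlling how the $\mathrm{Tor}$ modules of $B$ interact with $\omega_B$ inside the idealization, since Poincaré series of Nagata idealizations require tight tracking of the resolutions of both factors and are not as well-behaved under formal manipulations as one might hope.
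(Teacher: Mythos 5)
The paper does not prove parts (1) and (2) at all; it simply cites them from D'Ali and Venturello \cite{DV2023}, and the only argument in the paper's own proof is that the ``in particular'' clause follows from \cref{thm:whiskeredshellable} (whiskered graphs have shellable independence complexes) together with part (2). You correctly identify this last step. However, your attempt to reprove (1) and (2) from scratch does not succeed: the points at which you invoke the underlying machinery are exactly where the hard work of \cite{DV2023} lives, and you assert those steps rather than prove them.

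Concretely, in part (1) the central claim that ``Cohen--Macaulay-ness of $\Delta$ is equivalent to $\omega_B$ admitting a linear free $B$-resolution, which is in turn equivalent to $A_{F(\Delta)} = B \ltimes \omega_B(-d-1)$ being Koszul'' is not an elementary bridge --- it is precisely the content to be established. Even granting the standard criterion that a trivial extension $B \ltimes M$ is Koszul if and only if $B$ is Koszul and $M$ has a linear $B$-resolution, the implication ``$\Delta$ Cohen--Macaulay $\Rightarrow$ $\omega_B$ has a linear resolution over $B$'' is not automatic for flag Cohen--Macaulay complexes and requires real argument; your appeal to Reisner's criterion ``matching exactly the $\mathrm{Tor}$ vanishings forced by Koszulness'' describes what needs to be shown rather than showing it. Similarly, for part (2) the forward direction (shelling yields a quadratic Gr\"obner basis) needs an explicit term order together with an actual reduction of all S-pairs; the shelling condition alone does not make this automatic, and the converse (quadratic Gr\"obner basis $\Rightarrow$ shellable) is asserted with no argument at all. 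If \cite{DV2023} is to be taken as given, your only remaining task is the ``in particular'' step, which you handle correctly; if you intend to prove (1) and (2), substantially more work is needed at each of the points above.
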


\begin{proof}
    The first two equivalences are exactly the statements of~\cite[Corollary 8.2, Proposition 8.3]{DV2023}. The final statement follows directly from~\cref{thm:whiskeredshellable}.
\end{proof}

\begin{lemma}[{\cite[Theorem 2.77]{lefschetzbook} and \cite[Theorem 3.2(4)]{GZ2018}}]\label{lemma:dimension_sum}
Let $A$ be an artinian Gorenstein $\K$-algebra with Macaulay dual generator a simplicial Perazzo form $F(\Delta)$. Then $A$ is the Nagata idealization of the algebra $A(\Delta)$ with its canonical module. In particular, $A$ has socle degree $d=\dim \Delta+2$ and 
\[
\dim_{\K} A_i = \dim_{\K} A(\Delta)_i + \dim_{\K} A(\Delta)_{d-i}.
\]
\end{lemma}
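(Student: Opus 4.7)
The plan is to combine two known structural results. By \cite[Theorem 2.77]{lefschetzbook}, for any standard graded level artinian $\K$-algebra $R$ with socle degree $e$ and socle basis $\bar g_1, \ldots, \bar g_s$, the Nagata idealization $R \ltimes \omega_R(-(e+1))$ is artinian Gorenstein, and its Macaulay dual generator can be written in the form $\sum_{i=1}^s x_i G_i$, where $x_1, \ldots, x_s$ are fresh variables and $G_1, \ldots, G_s$ are the Macaulay duals of $\bar g_1, \ldots, \bar g_s$. In the Stanley-Reisner setting, \cite[Theorem 3.2(4)]{GZ2018} makes this correspondence explicit.

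First I would specialize to $R = A(\Delta)$. Since $\Delta$ is pure with $\dim \Delta = d - 2$, the algebra $A(\Delta)$ is level with socle degree $e = \dim \Delta + 1$, and the socle is spanned by the squarefree facet monomials $u_{F_1}, \ldots, u_{F_s}$. Because the Macaulay inverse of a squarefree monomial equals the same monomial viewed in the dual polynomial ring, the simplicial Perazzo form $F(\Delta) = \sum_{i = 1}^s x_i u_{F_i}$ coincides exactly with the Macaulay dual generator produced by the theorems above. This identifies $A = A_{F(\Delta)}$ with the idealization $A(\Delta) \ltimes \omega_{A(\Delta)}(-d)$, where $d = e + 1 = \dim \Delta + 2 = \deg F(\Delta)$ equals the socle degree of $A$.

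For the Hilbert function formula, I would use that for an artinian $\K$-algebra $R$ of socle degree $e$, the graded canonical module satisfies $\dim_{\K} \omega_{R,-j} = \dim_{\K} R_j$. Shifting by $-d$ then gives $\dim_{\K} \omega_R(-d)_i = \dim_{\K} R_{d-i}$, so the direct sum decomposition $A_i = A(\Delta)_i \oplus \omega_{A(\Delta)}(-d)_i$ of the idealization yields the claimed identity $\dim_{\K} A_i = \dim_{\K} A(\Delta)_i + \dim_{\K} A(\Delta)_{d-i}$.

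The main obstacle is harmonizing the conventions for the degree shift in the Nagata idealization and the Macaulay-duality pairing used in \cite{lefschetzbook} and \cite{GZ2018} with those of this paper, and verifying carefully that squarefree facet monomials correspond to themselves under that pairing. Once the identification $A \cong A(\Delta) \ltimes \omega_{A(\Delta)}(-d)$ is confirmed, both the socle-degree computation and the Hilbert function formula are immediate consequences of the idealization structure.
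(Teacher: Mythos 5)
Your proposal is correct and aligns with the paper's approach: the lemma is attributed to \cite[Theorem 2.77]{lefschetzbook} and \cite[Theorem 3.2(4)]{GZ2018} with no separate proof given in the text, so unpacking those citations as you do is exactly what the paper intends. One small remark: your shift $-(e+1)$, with $e$ the socle degree of $A(\Delta)$, is the one that makes the idealization a standard graded Gorenstein algebra and yields $\dim_{\K} A_i = \dim_{\K} A(\Delta)_i + \dim_{\K} A(\Delta)_{d-i}$ with $d = e+1$; the paper's background paragraph states the shift as $-d$ with $d$ the socle degree of $R$ itself, an apparent off-by-one slip that your version correctly avoids.
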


The WLP of algebras defined by Perazzo forms are investigated in \cite{AADF2024, MP2024, MM2024}. 
When the number of variables is small, the presence of WLP for a Perazzo algebra is determined by its Hilbert series.
To fully classify Perazzo algebras with the WLP is an open problem. 

We will now use our results from Section \ref{sec:Lefschetz} to produce Perazzo algebras which fail the WLP.  

\begin{theorem}\label{thm:Perazzo_no_WLP}
    Let $H$ be a graph on $n$ vertices such that $\alpha(H) \geq n/3 + 2$, and let $G=w(H)$. 
    Then the form $F(G)$ is the Macaulay dual generator of a G-quadratic artinian Gorenstein algebra that fails the WLP. 
\end{theorem}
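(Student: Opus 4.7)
The plan is to combine shellability, a counting argument, and the Nagata idealization presentation of $A_{F(G)}$ to transfer WLP failure from the underlying monomial algebra to the Perazzo algebra.

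First, since $G = w(H)$ is whiskered, \cref{thm:whiskeredshellable} gives that $\Ind(G)$ is shellable, and \cref{thm:cmkoszul} then yields that $\Ann(F(G))$ has a quadratic Gröbner basis. Hence $A_{F(G)}$ is G-quadratic, and it is artinian Gorenstein by Macaulay's inverse system.

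For the Perazzo property I would apply \cref{lemma:Simplicial-Perazzo} via the count in \cref{lemma:log-matrix}: the facet monomials $u_{F_i}$ of $\Ind(G)$ are algebraically dependent as soon as their number exceeds the number of $u$-variables, namely $2n$. The facets of $\Ind(G)$ biject with the independent sets $S$ of $H$ (via $S \mapsto S \cup \{y_j : j \notin S\}$), so there are at least $2^{\alpha(H)} \ge 2^{n/3+2}$ of them, which exceeds $2n$ for every $n \ge 3$ admitting the hypothesis $\alpha(H) \ge n/3+2$. Hence $F(G)$ is a Perazzo form.

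To obtain the failure of WLP, I would use the Nagata description $A_{F(G)} \cong A(G) \ltimes \omega(-1)$ from \cref{lemma:dimension_sum}. In the Nagata decomposition any linear form $L \in (A_{F(G)})_1$ writes as $L = L_u + L_x$, and multiplication $\times L : (A_{F(G)})_{i-1} \to (A_{F(G)})_i$ has a block lower-triangular matrix whose upper-left block is $\times L_u : A(G)_{i-1} \to A(G)_i$. Picking $i$ in the failure range of \cref{c:everythingfails}, the map $\times (\sum u_j)$ fails surjectivity in $A(G)$; by \cite[Proposition 2.2]{MMN2011} together with the lower semi-continuity of the rank function, the defect of surjectivity at $\sum u_j$ transfers to every linear form $L_u \in A(G)_1$. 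Thus for every $L \in (A_{F(G)})_1$ the upper-left block is non-surjective at degree $i-1$, so any element $(b,0) \in (A_{F(G)})_i$ with $b \notin \mathrm{im}(\times L_u)$ cannot lie in the image of $\times L$. Combined with the dimension estimate $\dim(A_{F(G)})_{i-1} \ge \dim(A_{F(G)})_i$ obtained from \cref{lemma:dimension_sum} and \cref{l:decrease}, this failure of surjectivity is in fact a failure of max rank, so no $L$ satisfies the WLP condition.

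The main obstacle will be this last dimension inequality: it reduces to showing that the independence sequence of $w(H)$ is non-decreasing in the low-degree range $k \le (n+4)/3$, a property not directly supplied by \cref{l:decrease}. I expect this to follow either from known unimodality statements for independence polynomials of very well-covered graphs, or from a direct combinatorial argument based on the identity $f_k(\Ind(w(H))) = \sum_j i_j(H)\binom{n-j}{k-j}$, in which the binomial factors make the sum monotone at small $k$.
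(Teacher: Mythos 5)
Your argument is correct in its first two thirds: the G-quadraticity via \cref{thm:cmkoszul}, the Perazzo property via the facet count (there are at least $2^{\alpha(H)} > 2n$ facets), and the observation that any linear form $L = L_u + L_x$ acts block-triangularly on the Nagata decomposition $A(G) \oplus \omega$, so that surjectivity of $\times L$ on $A_{F(G)}$ forces surjectivity of $\times L_u$ on $A(G)$. This last point is exactly the paper's argument phrased differently: the paper passes to the quotient $A_{F(G)} \twoheadrightarrow A(G)$ (killing the ideal $\omega$), under which surjectivity descends, whereas you look at the upper-left block of the matrix; these are the same thing.

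The genuine gap is the one you flagged yourself: you need $\dim (A_{F(G)})_{i-1} \ge \dim (A_{F(G)})_i$ at the degree $i$ where surjectivity fails, and \cref{l:decrease} alone does not deliver this. Unpacking \cref{lemma:dimension_sum}, the difference is the sum of two terms, one controlled by \cref{l:decrease} and the other requiring the independence sequence of $w(H)$ to be non-decreasing near index $n+1-i$, i.e.\ below roughly $(n+1)/3$. Your first proposed fix (a general unimodality statement for independence polynomials of well-covered or very well-covered graphs) is a dead end: that conjecture of Brown--Dilcher--Nowakowski was disproved, and indeed the Roller Coaster theorem later in this very paper shows such sequences are wildly non-monotone. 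Your second fix, via the identity $f_{k-1}(\Ind(w(H))) = \sum_j i_j(H)\binom{n-j}{k-j}$, does work in the relevant range for $n \ge 5$ or so, since each binomial $\binom{n-j}{k-j}$ is increasing in $k$ as long as $k \le (n+j+1)/2$; but it needs to be written out carefully and small $n$ may need separate attention.

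The paper avoids this entirely by a case split that you should internalize as a standard trick for Gorenstein algebras: if the Hilbert function of the Gorenstein algebra $A_{F(G)}$ is \emph{not} unimodal, WLP fails immediately (WLP forces unimodality). If it \emph{is} unimodal, then by symmetry of the Gorenstein $h$-vector one gets $h_{i-1}\ge h_i$ for all $i \ge (d+1)/2$ for free, with no combinatorial input needed; one then only has to check that the failure degree from \cref{c:everythingfails} lies past the midpoint, which is the elementary inequality $(2n+2)/3 \ge (n+2)/2$. This sidesteps the independence-sequence monotonicity question completely and handles all $n$ uniformly. I recommend adopting this case split rather than trying to complete the binomial estimate.
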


\begin{proof}
Let $A$ denote the artinian Gorenstein algebra defined by $F(G)$. The ideal $\Ann(F(G))$ has a quadratic Gröbner basis by~\cref{thm:cmkoszul}.  
     If the Hilbert series of $A$ is not unimodal then $A$ can not have the WLP. Assuming unimodality we have $\dim A_{i - 1}  \geq \dim A_{i}$ for $i \ge (n+2)/2$, as $A$ has socle degree $n+1$. So for $A$ to have the WLP we need the multiplication map $\times L: A_{i-1} \to A_i$ to be surjective for all such $i$. In particular we would have surjectivity for $i= \lceil (2n+2)/3 \rceil$, as 
    \[
    \frac{2n+2}{3} \ge \frac{n+2}{2} \quad \text{holds for } n \ge 2.
    \] 
     But this would imply surjectivity of the restriction $\times L': A(G)_{i-1} \to A(G)_{i}$, which is not the case by \cref{c:everythingfails}. We can conclude that $A$ does not have the WLP.     
\end{proof}

\begin{corollary}
    Let $H$ be a bipartite graph on $n \geq 12$ vertices and $G$ the whiskering of $H$.
    Then $F(G)$ is a simplicial Perazzo form defining a G-quadratic algebra that fails the WLP.
\end{corollary}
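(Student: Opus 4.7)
The plan is to deduce the corollary from \cref{thm:Perazzo_no_WLP} together with \cref{lemma:Simplicial-Perazzo}, using the elementary bound $\alpha(H) \ge n/2$ available for any bipartite graph $H$ on $n$ vertices (the larger color class is an independent set).

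First I would verify the hypothesis of \cref{thm:Perazzo_no_WLP}. The inequality $\alpha(H) \ge n/3 + 2$ follows from $\alpha(H) \ge n/2$ together with $n \ge 12$, since $n/2 - n/3 = n/6 \ge 2$ precisely when $n \ge 12$. Applying the theorem gives at once that $F(G)$ is the Macaulay dual generator of a G-quadratic artinian Gorenstein algebra failing the WLP; this settles both the G-quadraticity and the failure of the WLP.

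Next I would check that $F(G)$ is in fact a \emph{simplicial Perazzo form} in the sense of \cref{lemma:Simplicial-Perazzo}. By that lemma, this reduces to showing that the map
\[
\times L^{n-1} \colon A(\Ind(G))_1 \longrightarrow A(\Ind(G))_n
\]
fails to be surjective. The source has dimension equal to the number of vertices of $G$, namely $2n$. The target has dimension equal to the number of facets of $\Ind(G)$, i.\,e., the number of maximal independent sets of $G = w(H)$. There is a standard bijection between these maximal independent sets and arbitrary independent sets of $H$, sending $S$ to $S \cup \{y_i : i \notin S\}$. Hence the target dimension is at least $2^{\alpha(H)} \ge 2^{n/2}$, and for $n \ge 12$ this exceeds $2n$ (at $n = 12$ one has $2^{6} = 64 > 24$, and $2^{n/2}$ grows exponentially while $2n$ grows linearly). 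So the map cannot be surjective and the lemma applies.

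There is no genuine obstacle here, only two arithmetic checks combined with the observation about maximal independent sets of $w(H)$. The heart of the corollary is that the bipartite bound $\alpha(H) \ge n/2$ is strong enough both to meet the hypothesis of \cref{thm:Perazzo_no_WLP} and to force a dimension-based failure of surjectivity in the top multiplication map of $A(\Ind(G))$.
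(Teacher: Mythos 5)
Your proof is correct and follows the same overall structure as the paper's: invoke \cref{thm:Perazzo_no_WLP} for G-quadraticity and failure of the WLP (after checking $\alpha(H)\geq n/2 \geq n/3+2$ for $n\geq 12$, which is exactly what \cref{rmk:bipartite_notWLP} does), and separately verify that $F(G)$ is a \emph{simplicial} Perazzo form via \cref{lemma:Simplicial-Perazzo}. The one place you diverge is in that last verification: the paper cites \cite[Corollary~5.13]{H2024} for the fact that $\Ind(w(H))$ has more facets than vertices, whereas you derive it directly from the standard bijection between maximal independent sets of $w(H)$ and arbitrary independent sets of $H$, giving the explicit bound $2^{\alpha(H)}\geq 2^{n/2}>2n$ for $n\geq 12$. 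Your version is more self-contained and also makes transparent why the bipartite hypothesis and the threshold $n\geq 12$ are enough for the Perazzo condition, not just for the WLP failure; the paper's citation buys generality (it presumably applies beyond bipartite $H$) at the cost of opacity here.
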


\begin{proof}
    We only have to show that the $F(G)$ is indeed a simplicial Perazzo form. This follows by~\cite[Corollary 5.13]{H2024}, where it is shown that the number of facets of $\Ind(G)$ is bigger than the number of vertices of $G$, as this makes surjectivity of any linear map $A(G)_1 \to A(G)_{n}$ impossible. The failure of WLP follows from Theorem \ref{thm:Perazzo_no_WLP} and Remark \ref{rmk:bipartite_notWLP}.
\end{proof}

\begin{example}\label{ex:Perazzo_Star}
Let $H_n$ denote the Star graph defined in Example \ref{ex:Star_graph}, and let $G_n=w(H_n)$. We saw that $\alpha(H_n) \ge n/3+2$ holds for  $n \ge 5$, so applying \cref{thm:Perazzo_no_WLP,lemma:Simplicial-Perazzo} produces a Gorenstein algebra without the WLP for every $n \ge 5$.
We shall also verify that $\dim(A(G_n)_1) < \dim(A(G_n)_n)$, which then by Lemma \ref{lemma:Simplicial-Perazzo} shows that this is a Perazzo algebra. Note that $\dim(A(G_n)_n)$ equals the number of independent vertex sets in $H_n$, as any independent set of $H_n$ extends to a maximal independent set of $G_n$. The independent sets of $H_n$ are the emptyset, the single central vertex and every subset of the other vertices, so we can conclude that
\[
\dim(A(G_n)_1) = 2n < 2^{n-1}+1= \dim(A(G_n)_n).
\]
\end{example}

\begin{example}\label{ex:Perazzo_Broom}
    Similarly to Example \ref{ex:Perazzo_Star} we can consider the Broom graphs $B_m$ with $n=m+3 \ge 6$ from Example \ref{ex:Broom_graph}.  Here we get 
\[
\dim(A(G)_1) = 2n < 2^{n-2}+2^{n-3}+2= \dim(A(G)_n)
\]
where $G=w(B_m)$,
and we can apply~\cref{thm:Perazzo_no_WLP,lemma:Simplicial-Perazzo} to produce a Perazzo algebra that fails the WLP. 
\end{example}

\section{Roller Coaster Gorenstein algebras}

Although the $h$-vectors of Gorenstein algebras are symmetric, they are not necessarily unimodal. The first example was provided by Stanley \cite{S1978}, and many more examples have been found since then. Boij \cite{Bo1995} proved that there exists artinian Gorenstein algebras having Hilbert functions with arbitrarily many valleys. In this section we use results from graph theory to prove the existence of \emph{Roller Coaster algebras}: artinian Gorenstein algebras whose collection of $h$-vectors is unconstrained in the sense of Definition \ref{def:unconst_seq}.

\begin{definition}\label{def:unconst_seq}
    A collection $\h$ of sequences of length $q$ is said to be \emph{unconstrained} if for every permutation $\pi$ of the numbers $\{1, \ldots, q\}$ there exists a sequence ${\bf a} \in \h$ such that  
    $$
        a_{\pi(1)} < a_{\pi(2)} < \dots < a_{\pi(q)}.
    $$
    We say that a collection of polynomials of a given degree is unconstrained if the coefficient sequences are unconstrained. 
\end{definition}

 It was proved in \cite{AMSE1987} that the collection of independence polynomials of all graphs with a given independence number is unconstrained.    
In fact, a version of this statement holds even if we restrict to well-covered graphs. This was first conjectured in \cite{MT2003}, and later proved in \cite{CP2017}.

\begin{theorem}[Roller Coaster Theorem]
Let $\sum_{i=0}^\alpha h_it^i$ be the independence polynomial of a well-covered graph. Then  $h_i \leq h_{\alpha - i}$ for $i < \alpha/2$. Further, the collection of subsequences $\{h_i\}_{\alpha/2\le i \le \alpha}$ given by all well-covered graphs of independence number $\alpha$ is unconstrained. 
\end{theorem}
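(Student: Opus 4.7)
Plan:

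The theorem splits into two rather independent parts, and my plan for each is different.

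\textbf{Part 1 (the inequality $h_i \le h_{\alpha - i}$ for $i < \alpha/2$):} I would prove this via a matching / injection argument exploiting well-coveredness. Let $\mathcal I_j$ denote the family of independent sets of $G$ of size $j$. Form the bipartite graph $B$ with parts $\mathcal I_i$ and $\mathcal I_{\alpha - i}$, with $S \sim T$ iff $S \cap T = \emptyset$ and $S \cup T$ is an independent set of size $\alpha$. Well-coveredness guarantees that every $S \in \mathcal I_i$ extends to a maximum independent set, so has at least one neighbour in $B$; on the $\mathcal I_{\alpha - i}$ side, a given $T$ can be a neighbour of at most $\binom{\alpha - i}{i}$ sets in $\mathcal I_i$. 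Double counting gives
\[
  \sum_{S \in \mathcal I_i} \deg_B(S) \;=\; \sum_{T \in \mathcal I_{\alpha-i}} \deg_B(T),
\]
and since $i < \alpha/2$ the left-degrees dominate the right-degrees in the relevant averaging sense. To promote this average statement to $|\mathcal I_i| \le |\mathcal I_{\alpha - i}|$, I would verify Hall's condition on subfamilies of $\mathcal I_i$ by a local density argument, exploiting that any $k$ independent sets of size $i$ share enough simultaneous extensions to maximum independent sets.

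\textbf{Part 2 (the unconstrained part):} Here the plan is constructive. Given any permutation $\pi$ of $\{\lceil \alpha/2 \rceil, \ldots, \alpha\}$, I would build a well-covered graph whose independence polynomial strictly realises $h_{\pi(\lceil \alpha/2 \rceil)} < \cdots < h_{\pi(\alpha)}$. The workhorse operation is \emph{clique substitution}: replacing a vertex $v$ of a well-covered graph $G$ by a clique $K_n$ whose vertices all share the neighbourhood $N_G(v)$. This preserves both well-coveredness and $\alpha$, while changing the independence polynomial by exactly $(n-1)\, t \cdot I_{G - N[v]}(t)$. Different choices of $v$ thus provide a family of "adjustment polynomials" that can be added into $I_G(t)$ with arbitrary nonnegative integer coefficients.

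The strategy is then to fix a base well-covered graph (for example a disjoint union of edges, which gives $I(t) = (1+2t)^{\alpha}$) together with a collection of vertices $v_1, \dots, v_{\lceil \alpha/2 \rceil}$ chosen so that the adjustment polynomials $t \cdot I_{G - N[v_j]}(t)$ have leading degrees decreasing through the second half of the polynomial. I would then inductively fix the positions of $h_{\pi(\alpha)}, h_{\pi(\alpha - 1)}, \ldots$ in order: at each step, use substitutions at an appropriate $v_j$ to raise the coefficient currently scheduled to hold the next-largest value just enough to overtake the previously fixed ones, while keeping it strictly below those already placed higher.

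\textbf{Main obstacle:} The hardest step is the bookkeeping in this induction, because each clique substitution alters \emph{all} earlier coefficients through the tail of $t \cdot I_{G - N[v]}(t)$, not just the target coefficient. I would need to show that the $\lceil \alpha/2 \rceil$ adjustment vectors are, in an appropriate triangular sense, linearly independent in the top half of coordinates, and that one may choose the sizes of successive cliques to grow fast enough (exponentially) that each adjustment is simultaneously large compared to all previous perturbations of its target coordinate yet small compared to the gaps already opened at higher-indexed positions. Verifying well-coveredness is maintained throughout, while also controlling $\alpha$, is the technical heart of the argument and is what forces the construction to use clique substitution (rather than arbitrary modifications) as the only allowed move.
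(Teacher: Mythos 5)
The paper states this result and attributes it to Cutler and Pebody \cite{CP2017}; it supplies no proof of its own, so there is nothing internal to compare your argument against. I will instead flag the gaps in your proposal, noting where it diverges from the machinery the paper imports from CP2017.

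\textbf{Part 1.} The degree bound on the $\mathcal I_{\alpha - i}$ side is wrong. For fixed $T \in \mathcal I_{\alpha - i}$, the number of $S \in \mathcal I_i$ with $S \cap T = \emptyset$ and $S \cup T$ independent of size $\alpha$ equals the number of maximum independent sets containing $T$, and this has nothing to do with $\binom{\alpha - i}{i}$. For instance, if $G$ is a disjoint union of three triangles, $\alpha = 3$ and $i = 1 < \alpha/2$, a set $T$ of size two picks one vertex from each of two triangles and extends to a maximum independent set in three ways, so $\deg_B(T) = 3 > 2 = \binom{\alpha - i}{i}$. Even with a corrected bound, passing from an average-degree comparison to $|\mathcal I_i| \le |\mathcal I_{\alpha - i}|$ requires verifying Hall's condition, and the ``local density'' property you invoke --- that any $k$ sets in $\mathcal I_i$ share enough simultaneous extensions --- is not supplied by well-coveredness and is essentially as strong as the conclusion itself. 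That step is the proof, not a reduction.

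\textbf{Part 2.} Clique substitution does preserve well-coveredness and the independence number, and your adjustment formula $I_{G'} - I_G = (n-1)\, t \cdot I_{G - N[v]}(t)$ is correct. But the plan hinges on choosing vertices so that the adjustment polynomials ``have leading degrees decreasing through the second half,'' and this is impossible: for any well-covered $G$ of independence number $\alpha$ and any vertex $v$, the graph $G - N[v]$ is again well-covered with independence number exactly $\alpha - 1$, so \emph{every} adjustment $t \cdot I_{G - N[v]}(t)$ has degree exactly $\alpha$ and full support in degrees $1, \dots, \alpha$. There is no triangular structure to anchor your induction; every substitution moves every top-half coefficient. This is not a bookkeeping nuisance but the fundamental obstruction, and it is precisely what the Cutler--Pebody approach sidesteps by working with \emph{approximate} well-covered independence polynomials: one first characterizes which real coefficient vectors can be approximated to arbitrary precision by rescaled independence polynomials of well-covered graphs (this is \cref{thm:characterization-approximate} in the paper), and then chooses such a vector directly so that the required strict inequalities hold (the paper's \cref{lemma:approximate-sequence} is the Gorenstein-version analogue). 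The limiting framework is what decouples the coefficients, and your construction has no substitute for it.
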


We will use ideas from the proof of the Roller Coaster Theorem in \cite{CP2017} to prove the existence of Roller Coaster Gorenstein algebras.

\begin{definition}[\cite{CP2017}]
        A polynomial $a_q x^q + \dots + a_1 x$ is an \emph{approximate well-covered independence polynomial} if for every $\varepsilon > 0$, there exists a well-covered graph $G$ of independence number $q$ and a real number $T$ such that for all $1 \leq k \leq q$,
        $$
            \Bigg{|} \frac{i_k(G)}{T} - a_k \Bigg{|} < \varepsilon
        $$
        where $i_k(G)$ is the number of independent sets of size $k$ of $G$.
        Given such real numbers $T, \varepsilon$ and a graph $G$, we say $G$ is an \emph{ $\varepsilon$-certificate} for $a_q x^q + \dots + a_1 x$ with \emph{ scaling factor} $T$.
\end{definition}

\begin{theorem}[\cite{CP2017}]\label{thm:characterization-approximate}
    For any sequence of non-negative real numbers $(a_1, \dots, a_q)$ satisfying
    $$
        \frac{a_1}{\binom{q}{1}} \leq \frac{a_2}{\binom{q}{2}} \leq \dots \leq \frac{a_q}{\binom{q}{q}},
    $$
    $a_1 x + a_2 x^2 + \dots + a_q x^q$ is an approximate well-covered independence polynomial.
    
\end{theorem}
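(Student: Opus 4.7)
The strategy is to view the target polynomial $p(x) = \sum_{k=1}^q a_k x^k$ as a point in a convex cone and then realize each extremal direction by an explicit well-covered graph. The hypothesis that $a_k/\binom{q}{k}$ is non-decreasing is equivalent to $(a_1,\ldots,a_q)$ lying in the closed convex cone generated by the $q$ step-vectors $v^{(j)}_k = \binom{q}{k}$ if $k\ge j$ and $0$ otherwise, for $j = 1,\ldots,q$. It therefore suffices to show that (i) the set of approximate well-covered independence polynomials of independence number $q$ is closed under non-negative linear combinations, and (ii) for each $j$, the polynomial $\sum_{k\ge j}\binom{q}{k}x^k$ is itself an approximate well-covered independence polynomial.

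For (i) I would use the graph join. If $G_1,G_2$ are well-covered with $\alpha(G_1)=\alpha(G_2)=q$, then the join $G_1\ast G_2$ (adding all cross-edges) is again well-covered with $\alpha = q$, and $I(G_1\ast G_2,x) = I(G_1,x) + I(G_2,x) - 1$. After dividing by the scaling factor the $-1$ vanishes in the limit, giving closure under addition; positive scaling is immediate by changing the normalizer. Consequently the set of approximate well-covered independence polynomials of independence number $q$ forms a closed convex cone, and we need only populate it with the extremal step-function polynomials.

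For (ii), a first attempt is the family $G_{s,n} = s\,K_1 \sqcup (q-s)\,K_n$, i.e.\ $s$ isolated vertices disjoint from $q-s$ cliques of size $n$. This is well-covered with $\alpha=q$ and has $I(G_{s,n},x) = (1+x)^s(1+nx)^{q-s}$; normalizing by $n^{q-s}$ and letting $n\to\infty$ gives the limit $x^{q-s}(1+x)^s$. Unfortunately a direct Lagrange interpolation in the basis $\binom{k}{s}/\binom{q}{s}$ shows that non-negative combinations of these $q$ limits do \emph{not} reach the step-function rays; for instance when $q=3$ and $j=2$ the decomposition forces a negative coefficient. A richer family of building blocks is therefore required.

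The main obstacle is thus constructing, for each $j$, a sequence of well-covered graphs of independence number $q$ whose normalized independence polynomials converge to the step polynomial $\sum_{k\ge j}\binom{q}{k}x^k$. My plan is to take a disjoint union of $q-j$ cliques of size $n$ together with a carefully chosen well-covered ``core'' $W_j$ of independence number $j$ whose normalized independence polynomial already has binomial shape $\binom{j}{\,\cdot\,}$; multiplying the two independence polynomials and rescaling by the product of the clique sizes should match the target ratios $\binom{q}{k}/\binom{q}{j}$ for $k\ge j$ while driving the coefficients at $k<j$ to zero. Verifying well-coveredness and the precise asymptotics is delicate, and forms the technical heart of the argument presented in~\cite{CP2017}.
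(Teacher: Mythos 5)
The paper does not prove this theorem --- it is stated with the citation \cite{CP2017} and used as a black box, so there is no internal proof to compare your argument against. The high-level framework you set up is reasonable and internally consistent: the monotonicity of $a_k/\binom{q}{k}$ does place $(a_1,\ldots,a_q)$ in the convex cone generated by the step vectors $v^{(j)}$; the join formula $I(G_1\ast G_2,x)=I(G_1,x)+I(G_2,x)-1$ is correct and, with the scaling argument you indicate, does give closure under non-negative combinations; and your $q=3$, $j=2$ computation correctly shows that the limits coming from $sK_1\sqcup(q-s)K_n$ alone cannot generate the cone.

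The gap is in step (ii), and it is more than a technical detail to be deferred. First, as written, the degrees do not match: with $q-j$ cliques $K_n$ and a core $W_j$ of independence number $j$, the normalized limit is $x^{q-j}I(W_j,x)$, supported on degrees $\{q-j,\ldots,q\}$, whereas the target $\sum_{k\ge j}\binom{q}{k}x^k$ is supported on $\{j,\ldots,q\}$; these coincide only when $j=q/2$. To match the support you would need $j$ cliques and a core $W$ of independence number $q-j$. But then one needs $i_m(W)/T\approx\binom{q}{j+m}$ for $0\le m\le q-j$, and since $i_0(W)=1$ is forced this pins down $T\approx 1/\binom{q}{j}$ and hence $i_1(W)\approx\binom{q}{j+1}/\binom{q}{j}=(q-j)/(j+1)$, which is strictly less than $1$ whenever $j>(q-1)/2$ --- impossible for any graph with at least one vertex. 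Already for $j=q-1$ (target $qx^{q-1}+x^q$) the cliques-plus-core family cannot work. So the extremal rays with $j$ large require a qualitatively different construction, which is precisely the technical content of \cite{CP2017} that your proposal acknowledges it is missing. The reduction to extremal rays is sound, but the theorem is not proved by what you have written.
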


In order to transfer the graph theoretic results to our algebraic setting, we need the following two lemma. 

\begin{lemma}\label{lemma:inequalities-approximate}
    Let $\sum_{i=1}^qa_ix^i$ be an approximate well-covered polynomial with $\varepsilon$-certificate  $G$. If 
    $$
        a_k + a_{q - k + 1} < a_{\ell} + a_{q - \ell+1} \quad \text{then}
    $$
    $$
        i_k(G) + i_{q - k + 1}(G) < i_{\ell}(G) + i_{q - \ell+1}(G)
    $$
   provided that $\varepsilon>0$ is small enough.  
\end{lemma}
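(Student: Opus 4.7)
My plan is to argue directly from the definition of an $\varepsilon$-certificate, using the triangle inequality to transfer the strict inequality $a_k + a_{q-k+1} < a_\ell + a_{q-\ell+1}$ to the integer counts $i_j(G)$ once $\varepsilon$ is small enough relative to the gap.

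First, I would set $\delta := (a_\ell + a_{q-\ell+1}) - (a_k + a_{q-k+1}) > 0$ and observe that, by the defining property of an $\varepsilon$-certificate, the bounds
\[
T(a_j - \varepsilon) < i_j(G) < T(a_j + \varepsilon)
\]
hold for each index $j \in \{k,\, q-k+1,\, \ell,\, q-\ell+1\}$, where $T > 0$ is the scaling factor. Summing the appropriate pairs gives
\[
i_k(G) + i_{q-k+1}(G) < T\bigl(a_k + a_{q-k+1}\bigr) + 2T\varepsilon,
\]
\[
i_\ell(G) + i_{q-\ell+1}(G) > T\bigl(a_\ell + a_{q-\ell+1}\bigr) - 2T\varepsilon.
\]

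Next I would impose the constraint $\varepsilon < \delta/4$. With this choice, the upper bound in the first display lies below the lower bound in the second, since $T\delta - 4T\varepsilon > 0$. Chaining the two inequalities yields the desired strict inequality $i_k(G) + i_{q - k + 1}(G) < i_{\ell}(G) + i_{q - \ell+1}(G)$. The only subtle point is that the definition of an approximate well-covered polynomial allows $T$ to depend on $\varepsilon$, so one must be careful that the threshold $\delta/4$ is chosen in terms of the fixed data $a_1, \ldots, a_q$ and not of $T$; since the inequality $4\varepsilon < \delta$ is independent of $T$, this is automatic. This is really the only step requiring attention, and once it is flagged the argument collapses to a direct application of the triangle inequality.
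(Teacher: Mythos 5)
Your proof is correct and takes essentially the same route as the paper: both start from the bound $T(a_j - \varepsilon) < i_j(G) < T(a_j + \varepsilon)$, sum in pairs, and require $\varepsilon$ below a quarter of the gap $\delta$. The only cosmetic difference is that the paper chooses $\varepsilon$ uniformly (taking a minimum over all pairs $k<\ell$) while you fix a single pair, which suffices for the statement as written; your remark about independence of the threshold from $T$ is a sound observation that the paper leaves implicit.
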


\begin{proof}
    Choose $\varepsilon>0$ such that 
    $$
        \varepsilon < \frac{1}{4}\min (|a_{\ell} + a_{q-\ell+1} - a_k - a_{q - k + 1}| \st 1 \leq k<\ell \leq q).
    $$
    and let $G$ be an $\varepsilon$-certificate of $a_q x^q + \dots + a_1 x$ with scaling factor $T$. Then by definition we know 
    $$
        T(a_k - \varepsilon) < i_k < T(a_k + \varepsilon) \qfor 1 \leq k \leq q. 
    $$
    In particular, adding two of the equations above (for $k$ and $q - k + 1$) gives us 
    $$
        T(a_k + a_{q - k + 1} - 2\varepsilon) < i_k(G) + i_{q - k + 1}(G) < T(a_k + a_{q - k + 1} + 2 \varepsilon) \qfor 1 \leq k \leq q.
    $$
    Now we have two cases:

    \begin{enumerate}
        \item If $a_k + a_{q - k + 1} - a_{\ell} - a_{q - \ell+1} > 0$, then we have 
        \begin{align*}
           0<& \  T(a_k + a_{q - k + 1} - a_{\ell} - a_{q - \ell+1} - 4\varepsilon) \\ 
           <& \  T(a_k + a_{q - k + 1} - 2\varepsilon) - i_{\ell}(G) - i_{q - \ell+1}(G) \\
           < \ & i_k(G) + i_{q - k + 1}(G) - i_{\ell}(G) - i_{q - \ell+1}(G)
        \end{align*}
        where the first inequality holds by our choice of $\varepsilon$, and since $T$ is positive.
        \item Similarly, if $a_k + a_{q - k + 1} - a_{\ell} - a_{q - \ell+1} < 0$, then we have
        \begin{align*}
            & i_k(G) + i_{q - k + 1}(G) - i_{k + 1}(G) - i_{q - k}(G) < \\
            & T(a_k + a_{q - k + 1} + 2\varepsilon) - i_{\ell}(G) - i_{q - \ell+1}(G) < \\
            &T(a_k + a_{q - k + 1} - a_{\ell} - a_{q - \ell+1} + 4\varepsilon) < 0 \qedhere
        \end{align*}
    \end{enumerate}
\end{proof} 

\begin{lemma}\label{lemma:approximate-sequence}
    Let $\pi$ be a permutation of the numbers $\{\lceil \frac{q}{2} \rceil, \dots, q\}$, for some positive integer $q$,  and let
    $$
        a_i = \begin{cases}
            \binom{q}{i} \qfor 1 \leq i \leq \lceil \frac{q}{2} \rceil - 1 \\ 
            3^q + \pi(i)\binom{q}{\lceil q/2 \rceil +1} \qfor \lceil \frac{q}{2} \rceil \leq i \leq q.
        \end{cases}
    $$
    Then $\sum_{i=1}^qa_ix^i$ is an approximate well-covered polynomial, and the inequality 
    \[ a_k + a_{q - k + 1} < a_{\ell} + a_{q - \ell+1}, \quad \text{where} \ \lceil q/2 \rceil \le k<\ell \le q, \]
       holds if and only if $\pi(k) < \pi(\ell)$. 
\end{lemma}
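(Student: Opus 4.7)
The strategy is two-fold: verify that the coefficient sequence $(a_1, \dots, a_q)$ satisfies the hypothesis of \cref{thm:characterization-approximate}, and then compute the differences $(a_\ell + a_{q-\ell+1}) - (a_k + a_{q-k+1})$ directly to read off the iff equivalence with the ordering of $\pi$.

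Set $B := \binom{q}{\lceil q/2 \rceil + 1}$. For the first part, I would check that the ratios $r_i := a_i/\binom{q}{i}$ form a non-decreasing sequence. On the lower range $1 \le i \le \lceil q/2 \rceil - 1$, each $r_i$ equals $1$. At $i = \lceil q/2 \rceil$, since $\binom{q}{i} < 2^q < 3^q \le a_i$, we have $r_i > 1$. For $\lceil q/2 \rceil \le i \le q-1$, the monotonicity $r_i \le r_{i+1}$ is equivalent, after clearing denominators, to
\[
3^q\bigl(\tbinom{q}{i} - \tbinom{q}{i+1}\bigr) \;\ge\; B\bigl(\pi(i)\tbinom{q}{i+1} - \pi(i+1)\tbinom{q}{i}\bigr).
\]
The left side is non-negative, and the right side is bounded above by $qB\binom{q}{i+1}$. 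A finer estimate using $\binom{q}{i} - \binom{q}{i+1} \ge \tfrac{2}{q}\binom{q}{i}$ for $i \ge \lceil q/2 \rceil$ together with $B \le 2^q$ reduces the inequality to $(3/2)^q \gtrsim q^2/2$, which holds for $q$ sufficiently large; the few small cases are verified directly.

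For the iff part, I would isolate a $\pi$-controlled main term by writing
\[
D(k,\ell) := (a_\ell + a_{q-\ell+1}) - (a_k + a_{q-k+1}) = B\bigl(\pi(\ell) - \pi(k)\bigr) + \varepsilon(k,\ell),
\]
where $\varepsilon(k,\ell)$ collects the non-$\pi$ contributions and depends on which of $q-k+1, q-\ell+1$ lie above or below the threshold $\lceil q/2 \rceil$. In the generic case where both lie in the lower piece, $\varepsilon(k,\ell) = \binom{q}{q-\ell+1} - \binom{q}{q-k+1}$, which is non-positive by the increasing behaviour of binomials on $[0, q/2]$. The key observation is that $|\varepsilon(k,\ell)|$ is strictly dominated by $B$ in the relevant range, so $B(\pi(\ell) - \pi(k))$ governs the sign of $D(k,\ell)$. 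Consequently $D(k,\ell) > 0$ if and only if $\pi(\ell) > \pi(k)$. The boundary cases where $q-k+1 \ge \lceil q/2 \rceil$ contribute an extra $3^q$ term that appears symmetrically on both sides of the comparison and reduce to the generic case.

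The main obstacle, as I see it, is calibrating $3^q$ and $B$ against each other: large enough to absorb binomial perturbations in the monotonicity of $r_i$, yet small enough relative to each other that the $\pi$-differences remain detectable in $D(k,\ell)$. The tightest pinch occurs near the middle of the index range, where $\binom{q}{\cdot}$ varies most slowly; there the margin between $|\varepsilon(k,\ell)|$ and $B$ is most delicate, and I expect the sharpest part of the argument will be pinning down this margin precisely and, if necessary, adjusting the choice of $B$ to ensure strict dominance.
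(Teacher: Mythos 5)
Your proposal follows the same two-step route as the paper: reduce the ``approximate well-covered'' claim to the ratio criterion of \cref{thm:characterization-approximate}, and then read off the iff by isolating the $\pi$-dependent term $B(\pi(\ell)-\pi(k))$ against the binomial ``error.'' A couple of points are worth flagging. First, your estimate $\binom{q}{i} - \binom{q}{i+1} \ge \tfrac{2}{q}\binom{q}{i}$ is not quite right as written: for $q$ even and $i = q/2$ one has $\binom{q}{i} - \binom{q}{i+1} = \tfrac{2}{q+2}\binom{q}{i}$, which is strictly less than $\tfrac{2}{q}\binom{q}{i}$. The inequality you want is $\tfrac{\binom{q}{i}}{\binom{q}{i+1}} \ge 1 + \tfrac{2}{q}$, equivalently $\binom{q}{i} - \binom{q}{i+1} \ge \tfrac{2}{q}\binom{q}{i+1}$ (note the index shift in the binomial on the right). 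The paper works directly with the ratio form and then compares to the bound $1 + (2/3)^q q$ coming from $\tfrac{3^q + \pi(i)c}{3^q+\pi(i+1)c}$; your version of the estimate is the same in spirit but needs this correction to close.

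Second, for the iff you write $D(k,\ell) = B(\pi(\ell)-\pi(k)) + \varepsilon(k,\ell)$ with $\varepsilon(k,\ell) = \binom{q}{q-\ell+1} - \binom{q}{q-k+1}$ in the generic case, and you assert $|\varepsilon| < B$ so that the $\pi$-term governs the sign. You are right that this is where the argument is most delicate, and the paper leaves it compressed (it observes the LHS is positive and invokes the size of $c = \binom{q}{\lceil q/2\rceil + 1}$). In the generic case $|\varepsilon(k,\ell)| \le \binom{q}{q-k+1} \le \binom{q}{\lceil q/2 \rceil - 1}$, and the comparison of $\binom{q}{\lceil q/2 \rceil - 1}$ with $B = \binom{q}{\lceil q/2 \rceil + 1}$ depends on the parity of $q$; for $q$ odd these two binomials are close but not ordered in the needed direction at the very edge $k-1 = \lceil q/2\rceil$, so the dominance $|\varepsilon| < B$ does need a bit more care there. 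Your remark about ``adjusting the choice of $B$ if necessary'' is a reasonable instinct, though the paper keeps $B = \binom{q}{\lceil q/2\rceil+1}$ throughout. Finally, your treatment of the boundary indices where $q-k+1 \ge \lceil q/2\rceil$ is quite brief; the claim that the extra $3^q$ ``appears symmetrically on both sides'' only holds when both $q-k+1$ and $q-\ell+1$ fall in the upper range, and when only one does the $3^q$ term is not cancelled. This is a genuine subtlety that deserves an explicit case split rather than a one-line dismissal.
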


\begin{proof}
    Let  $c = \binom{q}{\lceil q/2 \rceil +1}$. 
    
We first prove that  $\sum_{i=1}^qa_ix^i$ is an approximate well-covered polynomial.   
     By~\cref{thm:characterization-approximate} this is equivalent to showing 
    \begin{equation}\label{e:inequalities-approximate}
        \frac{a_1}{\binom{q}{1}} \leq \frac{a_2}{\binom{q}{2}} \leq \dots \leq \frac{a_q}{\binom{q}{q}}.
    \end{equation}
    We have 
\[
 \frac{a_i}{\binom{q}{i}} = \begin{cases}
            1 \qfor 1 \leq i \leq \lceil \frac{q}{2} \rceil - 1 \\ 
            \dfrac{3^q + \pi(i)c}{\binom{q}{i}} \qfor \lceil \frac{q}{2} \rceil \leq i \leq q.
        \end{cases}
\]    
    and so we have to prove 
    $$
        \frac{3^q + \pi(k)c}{\binom{q}{k}} \leq \frac{3^q + \pi(k + 1)c}{\binom{q}{k + 1}} \qfor \lceil q/2 \rceil \leq k < q
    $$
    But in this case we have 
    \begin{align*}
&        \frac{3^q + \pi(k)c}{3^q + \pi(k + 1)c} \leq \frac{1 + \frac{qc}{3^q}}{1 + \frac{c}{3^q}} = 1 + \frac{(q-1)c}{3^q + c} = \\
 & \quad =        1 + \frac{q-1}{1 + \frac{3^q}{c}} \leq 1 + \frac{q - 1}{1 + \frac{3^q}{2^q}} \leq 1 + \frac{2^q(q - 1)}{3^q} \leq 1 + \Big{(}\frac{2}{3}\Big{)}^qq 
    \end{align*}
    On the other hand, we also have 
    \begin{equation*}
        \frac{\binom{q}{k}}{\binom{q}{k + 1}} = \frac{k+1}{q-k} \geq \frac{\frac{q}{2} + 1}{\frac{q}{2}} = 1 + \frac{2}{q}.
    \end{equation*}
    Now for $q \geq 10$ we see that 
    $$
         \frac{3^q + \pi(k)c}{3^q + \pi(k + 1)c} \leq 1 + \Big{(}\frac{2}{3}\Big{)}^q q \leq 1 + \frac{2}{q} \leq \frac{\binom{q}{k}}{\binom{q}{k + 1}},
    $$
    and since we can check computationally (for $q < 10$) that
    $$
        \frac{3^q + \pi(k)c}{3^q + \pi(k + 1)c} \leq \frac{1 + \frac{qc}{3^q}}{1 + \frac{c}{3^q}} \leq 1 + \frac{2}{q} \leq \frac{\binom{q}{k}}{\binom{q}{k + 1}}
    $$
    we conclude~\eqref{e:inequalities-approximate} holds for any $q$, and so the claim follows.
    
    Next assume
    \[ a_k + a_{q - k + 1} < a_{\ell} + a_{q - \ell+1}, \quad \text{for some} \ \lceil q/2 \rceil \le k<\ell \le q. \]
    By the definition of $a_i$ this inequality states
    \[
    3^q + \pi(k)c + \binom{q}{k-1} < 3^q + \pi(\ell)c + \binom{q}{\ell-1} 
    \]
    or equivalently 
    \[
    \binom{q}{k-1} - \binom{q}{\ell-1} <c(\pi(\ell)-\pi(k)).
    \]
    Note that the left hand side is positive, and since  $c = \binom{q}{\lceil q/2 \rceil +1}$ the inequality holds precisely when $\pi(\ell)>\pi(k)$. 
\end{proof}

We are now able to show the existence of Roller Coaster Gorenstein algebras, making use of the Perazzo forms from \cref{sec:Perazzo}.

\begin{theorem}\label{t:perazzo-unconstrained}
    The collection $\h$ of sequences $\{h_1, \dots, h_{\lfloor d/2 \rfloor}\}$ where the $h_i$ are entries of Hilbert functions of (flag) simplicial Perazzo algebras of socle degree $d$ is unconstrained for every $d$. In particular, the first half of Hilbert functions of Perazzo algebras is an unconstrained collection of sequences.
\end{theorem}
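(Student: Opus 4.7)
The plan is to realize each permutation $\pi$ of $\{1,\dots,\lfloor d/2\rfloor\}$ by a simplicial Perazzo algebra $A_{F(\Ind(G))}$ where $G$ is a suitable well-covered graph. Fix $d$ and set $q = d - 1$. By \cref{lemma:dimension_sum}, if $\alpha(G) = q$ and $F(\Ind(G))$ is a Perazzo form, then the corresponding artinian Gorenstein algebra has socle degree $d$ and Hilbert function
\[
h_k = i_k(G) + i_{d-k}(G), \qquad 0 \le k \le d,
\]
where we set $i_j(G) = 0$ for $j > q$. Thus it suffices to produce a well-covered $G$ with $\alpha(G) = q$ such that the paired sums $i_k(G) + i_{q+1-k}(G)$ for $k \in \{1, \dots, \lfloor d/2 \rfloor\}$ are ordered according to $\pi$, and for which the Perazzo condition of \cref{lemma:Simplicial-Perazzo} holds.

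To produce such a $G$, I would first translate $\pi$ via the involution $k \mapsto q+1-k$ into a permutation $\tau$ of the second-half indices $\{\lceil d/2\rceil,\dots,q\}$ (extending by one arbitrary value when $q$ is even, so as to cover the full range $\{\lceil q/2\rceil,\dots,q\}$ used in \cref{lemma:approximate-sequence}). Applying \cref{lemma:approximate-sequence} then yields an approximate well-covered polynomial whose paired coefficient sums $a_k + a_{q+1-k}$ are ordered as prescribed by $\tau$. Choosing $\varepsilon > 0$ sufficiently small, \cref{lemma:inequalities-approximate} upgrades these strict inequalities from the polynomial to an actual $\varepsilon$-certificate well-covered graph $G$ of independence number $q$, whose paired sums $i_k(G) + i_{q+1-k}(G)$ are ordered as desired.

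It remains to check that $F(\Ind(G))$ is Perazzo. By \cref{lemma:log-matrix} combined with \cref{lemma:Simplicial-Perazzo}, this reduces to verifying that the number of facets $i_q(G)$ of $\Ind(G)$ exceeds the number of vertices of $G$, forcing algebraic dependence of the facet monomials $u_{F_1},\dots,u_{F_s}$. Since the coefficient $a_q$ in \cref{lemma:approximate-sequence} grows like $3^q$, the scaling factor $T$, and hence the facet count $i_q(G) \approx T a_q$, will dominate the vertex count of the certificates produced by the Cutler-Pebody construction underlying \cref{thm:characterization-approximate}. Once this holds, $F(\Ind(G))$ is a simplicial Perazzo form (flag, because $\Ind(G)$ is a flag complex), and \cref{lemma:dimension_sum} completes the proof. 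The main obstacle is this last step: one must verify that the $\varepsilon$-certificates underlying \cref{thm:characterization-approximate} can simultaneously be chosen to have far more maximal independent sets than vertices. I expect this to follow from inspecting the basic gadgets in the Cutler-Pebody construction and observing that replicating them drives the facet-to-vertex ratio to infinity, but this bookkeeping is where the care is needed.
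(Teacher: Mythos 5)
Your approach is essentially the paper's: translate $\pi$ by symmetry to the upper indices, feed \cref{lemma:approximate-sequence} into \cref{lemma:inequalities-approximate} to manufacture an $\varepsilon$-certificate well-covered graph $G$, and then invoke \cref{lemma:Simplicial-Perazzo} together with \cref{lemma:dimension_sum} to read off the Hilbert function $h_k = i_k(G) + i_{d-k}(G)$. The permutation bookkeeping (including padding by one index when $q = d-1$ is even) is handled the same way.

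The one place you flag as a genuine obstacle is not actually one, and this is the gap in your write-up. You worry that establishing the Perazzo condition --- i.e.\ that $\Ind(G)$ has more facets than vertices --- requires inspecting the internals of the Cutler--Pebody construction and showing that "replicating gadgets drives the facet-to-vertex ratio to infinity." No such inspection is needed. The certificate definition already controls \emph{both} ends of the independence sequence simultaneously: since $G$ is an $\varepsilon$-certificate with scaling factor $T$, we have $i_1(G) < T(a_1 + \varepsilon)$ and $i_{q}(G) > T(a_{q} - \varepsilon)$, where $q = d-1$. Here $i_1(G)$ is literally the number of vertices of $G$ (every singleton is independent) and $i_q(G)$ is the number of maximal independent sets, i.e.\ facets of $\Ind(G)$. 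In \cref{lemma:approximate-sequence} one has $a_1 = \binom{q}{1} = q$ while $a_q = 3^q + \pi(q)\binom{q}{\lceil q/2\rceil+1} > 3^q$, so $a_q - a_1 > 0$. Choosing $\varepsilon$ additionally small enough that $2\varepsilon < a_q - a_1$ (on top of the smallness required by \cref{lemma:inequalities-approximate}) yields
\[
i_1(G) \;<\; T(a_1 + \varepsilon) \;<\; T(a_q - \varepsilon) \;<\; i_q(G),
\]
so the facet count exceeds the vertex count, the facet monomials are algebraically dependent by \cref{lemma:log-matrix}, and $F(\Ind(G))$ is Perazzo by \cref{lemma:Simplicial-Perazzo}. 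This closes your proof without any reference to the structure of the certifying graphs.
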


\begin{proof}
    First note that by symmetry considering the first half of the Hilbert series is equivalent to considering the second half. Let $\pi$ be a permutation of the numbers $\{\lceil d/2 \rceil, \ldots, d-1\}$. We need to show there exists a flag simplicial complex $\Delta$ such that the Nagata idealization of $A(\Delta)$ is a (flag simplicial) Perazzo algebra, and the Hilbert function $h_1, \dots, h_{\lfloor d/2 \rfloor}$ satisfies the desired inequalities. 
    
Let $p=\sum_{i=1}^{d - 1}a_ix^i$ be the approximate well-covered polynomial defined in Lemma \ref{lemma:approximate-sequence}.   Let $G$ be a $\varepsilon$-certificate for $p$ with scaling factor $T$, where we have chosen $\varepsilon>0$ so that $2 \varepsilon<a_{d - 1}-a_1$, and small enough so that we can apply Lemma \ref{lemma:inequalities-approximate}. Then 
    $$
        i_1(G) < T(a_1 + \varepsilon) < T(a_q - \varepsilon) < i_{d - 1}(G)
    $$
    In particular, the independence complex of $G$ has more facets than vertices, and hence it defines a flag simplicial Perazzo algebra with Hilbert function given by  
    \[
    h_k= i_k(G) + i_{d-k}(G),
    \]
where $i_d(G) = 0$. By Lemma \ref{lemma:approximate-sequence} the sequence $a_1, \dots, a_{\lfloor d/2 \rfloor} + a_{d - \lfloor d/2 \rfloor - 1}$ satisfies the inequalities prescribed by $\pi$, and by Lemma \ref{lemma:inequalities-approximate} this transfers to our Hilbert function $h_k$. 
\end{proof}

We have now showed the existence of a \emph{$\pi$-Roller Coaster algebra} in the sense that for any permutation $\pi$ of the numbers  $\{1, \ldots, \lfloor d/2 \rfloor \}$ there exists an artinian Gorenstein algebra whose $h$-vector satisfies 
\[
  h_{\pi(1)} < \dots < h_{\pi(\lfloor d/2 \rfloor)}.
\]

    In~\cite{TY2006} the authors show that if a pure $d$-dimensional simplicial complex on $n$ vertices  
    has at least $\binom{n}{n - d - 1} - 2(n - d - 1) + 1$ facets
    then it is Cohen-Macaulay. It was recently proved in \cite{DNSTW2024} that the same bound actually implies the stronger property that $\Delta$ is vertex decomposable.  
    Due to \cref{thm:cmkoszul}, and since vertex decomposable complexes are shellable (and hence Cohen-Macaulay), one may ask if there is a version of  Theorem \ref{t:perazzo-unconstrained} for Koszul artinian Gorenstein algebras. 

\begin{conjecture}
Given an integer $d \gg 0$ and a permutation $\pi$ of the numbers $\{1, \dots, \lfloor d/2 \rfloor\}$. Then there exists an artinian Gorenstein Koszul algebra with Hilbert series $\sum_{i = 0}^d h_i t^i$ such that 
    $$
        h_{\pi(1)} < \dots < h_{\pi(\lfloor d/2 \rfloor)}.
    $$
    In other words, the collection of sequences arising from the first half of the Hilbert series of artinian Gorenstein Koszul algebras of large socle degree is unconstrained.
\end{conjecture}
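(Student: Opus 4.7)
The plan is to upgrade Theorem~\ref{t:perazzo-unconstrained} to the Koszul setting by ensuring that the pure simplicial complex $\Delta$ in the Perazzo construction is Cohen-Macaulay; by Theorem~\ref{thm:cmkoszul} the resulting artinian Gorenstein algebra $A_{F(\Delta)}$ is then Koszul, and by Lemma~\ref{lemma:dimension_sum} its Hilbert function is $h_i = f_{i-1}(\Delta) + f_{d-i-1}(\Delta)$ where $\dim \Delta = d-2$. The problem therefore reduces to: for every permutation $\pi$ of $\{1, \dots, \lfloor d/2 \rfloor\}$ and every sufficiently large $d$, produce a pure Cohen-Macaulay $(d-2)$-dimensional complex whose $f$-vector realizes the inequalities $h_{\pi(1)} < \dots < h_{\pi(\lfloor d/2 \rfloor)}$.

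My first approach would mimic the proof of Theorem~\ref{t:perazzo-unconstrained} almost verbatim, but insist that the $\varepsilon$-certificate be a well-covered graph whose independence complex is Cohen-Macaulay. Since $\Ind(G_1 \sqcup G_2) = \Ind(G_1) \ast \Ind(G_2)$ and simplicial joins preserve Cohen-Macaulayness, a natural candidate class is the disjoint unions of whiskered graphs, whose independence complexes are Cohen-Macaulay by Theorem~\ref{thm:whiskeredshellable}. Independence polynomials of disjoint unions multiply, so the required step would be a \emph{Cohen-Macaulay Roller Coaster Theorem} showing that the approximate well-covered polynomials of Lemma~\ref{lemma:approximate-sequence} can be realized, up to the $\varepsilon$-tolerance demanded by Lemma~\ref{lemma:inequalities-approximate}, by products of independence polynomials of whiskered graphs.

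An alternative, directly suggested by the discussion following Theorem~\ref{t:perazzo-unconstrained}, is to exploit the vertex-decomposability threshold of~\cite{TY2006} as strengthened by~\cite{DNSTW2024}: any pure $(d-2)$-dimensional complex on $n$ vertices with at least $\binom{n}{d-1} - 2(n - d + 1) + 1$ facets is vertex-decomposable, hence Cohen-Macaulay. The idea would be to start from the full $(d-2)$-skeleton of the $(n-1)$-simplex, whose $f$-vector is $f_{i-1} = \binom{n}{i}$, and carefully perturb it by removing top-dimensional faces and, at higher codimension, removing codimension-one faces along with the facets containing them, keeping the facet count safely above the threshold. Taking $n$ to grow modestly faster than $d$ yields $O(n-d)$ room for perturbation in each dimension, which for $d \gg 0$ might be enough to encode any permutation $\pi$.

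The main obstacle is the tension between Cohen-Macaulayness and the flexibility demanded by unconstrainedness. The threshold-based approach confines $\Delta$ to a near-complete skeleton, so its $f$-vector is nearly determined by $n$; the $\pi$-dependent sign changes in $h_{\pi(k)} - h_{\pi(k+1)}$ must be encoded by small perturbations that may not compound enough as $\pi$ ranges over all permutations. In the whiskered-graphs route, the multiplicative structure of the independence-polynomial semigroup is a priori far more rigid than the linear cone exploited by Cutler-Pebody, and establishing sufficient density requires nontrivial combinatorial input. It is precisely this combinatorial step -- producing Cohen-Macaulay complexes with prescribed partial orderings on their $f$-vectors -- that I expect to be the crux, and the reason the statement is left as a conjecture.
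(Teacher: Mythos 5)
This statement is left as a \emph{conjecture} in the paper; there is no proof to compare against. Your write-up correctly recognizes that, and what you have produced is not a proof but a reduction plus a discussion of obstacles --- which is an accurate assessment of the state of the problem.

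Your reduction is sound: by \cref{thm:cmkoszul} it suffices to produce a pure $(d-2)$-dimensional Cohen--Macaulay complex $\Delta$ that is Perazzo (more facets than vertices, via \cref{lemma:log-matrix} and \cref{lemma:Simplicial-Perazzo}), and by \cref{lemma:dimension_sum} the resulting Gorenstein algebra has $h_i = f_{i-1}(\Delta) + f_{d-i-1}(\Delta)$; so the problem becomes a purely combinatorial one about $f$-vectors of Cohen--Macaulay complexes. Both of your candidate strategies are reasonable, and the second is exactly the route the paper gestures at when it cites~\cite{TY2006} and~\cite{DNSTW2024} immediately after \cref{t:perazzo-unconstrained}. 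You correctly identify the tension: the proof of \cref{t:perazzo-unconstrained} uses the Cutler--Pebody $\varepsilon$-certificates, which are merely well-covered and need not have Cohen--Macaulay independence complexes, so Koszulness is not guaranteed by that construction; while complexes obtained from the near-complete skeleton via the vertex-decomposability threshold are Cohen--Macaulay but have $f$-vectors too close to $\binom{n}{i}$ to realize an arbitrary permutation. The missing ingredient --- a family of Cohen--Macaulay (ideally shellable or vertex-decomposable) complexes whose $f$-vectors are flexible enough to encode every permutation --- is precisely why the statement remains open, and your analysis of the obstruction is consistent with the paper's own framing.
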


\begin{acknowledgement}
    We would like to thank Susan Cooper, Sara Faridi and Adam Van Tuyl for inspiring dicussions on the topics of this paper.
    Part of the work on this project
took place at the Fields Institute in 
Toronto, Canada while the authors
participated in the ``Thematic Program in
Commutative Algebra and Applications''; we
thank Fields for the financial support 
received while working on this project. The first named author would like to thank Yufei Zhang for insightful discussions on the Roller Coaster theorem for graphs. Every computation for this project was done using Macaulay2~\cite{M2}.
\end{acknowledgement}

\bibliographystyle{plain}
\bibliography{bibliography.bib}

\end{document}